\newtheorem{thm}{Theorem}
\newtheorem{lemma}{Lemma}
\newtheorem{pro}{Proposition}
\numberwithin{equation}{section} \setcounter{tocdepth}{1}
\begin{document}

\vspace{0.5in}
\renewcommand{\bf}{\bfseries}
\renewcommand{\sc}{\scshape}
%insert defs/styles
\vspace{0.5in}

\title[Discrete Dynamics of a Phytoplankton-Zooplankton Model ...]{Discrete Dynamics of a Phytoplankton-Zooplankton Model with Toxin-Mediated Interactions}

\author{Sobirjon Shoyimardonov}

\address{S.K. Shoyimardonov$^{a,b}$ \begin{itemize}
\item[$^a$] V.I.Romanovskiy Institute of Mathematics, 9, University str.,
Tashkent, 100174, Uzbekistan;
\item[$^b$] National University of Uzbekistan,  4, University str., 100174, Tashkent, Uzbekistan.
\end{itemize}}
\email{shoyimardonov@inbox.ru}

% Current address (if needed):
%\curraddr{}

%    Information for second author (if needed):
%\author{Author Two}
%\address{}
%\email{}
%\thanks{Support information for the second author.}

%    General info
%%%%%%%%%%%%%%%%%%%%%%%%%%%%%%%%%%%%%%%%%%%%%%%%%%%

%                                                                                                                           %
%         Please use the current 2010 Mathematics Subject Classification:             %
%         http://www.ams.org/mathscinet/msc/                                                        %
%         http://www.zentralblatt-math.org/msc/en/                                                 %
%%%%%%%%%%%%%%%%%%%%%%%%%%%%%%%%%%%%%%%%%%%%%%%%%%%

\keywords{Phytoplankton-zooplankton system, Holling type, fixed point, Neimark-Sacker bifurcation}

\subjclass[2010]{92D25, 37G15, 39A30}

\begin{abstract}
We investigate the dynamics of a discrete phytoplankton-zooplankton model incorporating Holling type~III predation and Holling type~II toxin release. The existence and stability of positive fixed points are analyzed, and it is shown that when two such points, $E_1$ and $E_2$, exist, $E_2$ is always a saddle. A Neimark-Sacker bifurcation at $E_1$ is verified using the normal form method, indicating the emergence of closed invariant curves. This bifurcation implies that phytoplankton and zooplankton populations may exhibit sustained periodic oscillations, which could correspond to natural plankton bloom cycles. The global stability of the boundary equilibrium $(1,0)$ is also established. Numerical simulations are presented to illustrate and confirm the theoretical findings.
\end{abstract}

\maketitle

\section{Introduction}

Oceans play a vital role in regulating the Earth's climate, supporting biodiversity, and sustaining global fisheries. At the foundation of the marine food web, phytoplankton are responsible for producing nearly half of the world's oxygen through photosynthesis, while zooplankton act as primary consumers, transferring energy to higher trophic levels. Understanding the interaction dynamics between these populations is essential for predicting ecosystem responses to environmental changes such as nutrient enrichment, climate variability, and pollution. These interactions are inherently nonlinear and can give rise to complex dynamical behaviors, including oscillations, sudden regime shifts, and intricate spatio-temporal patterns. Numerous studies have been devoted to modeling marine ecosystems and analyzing such dynamics \cite{Chatt, Chen, Shang, Hong, Kome, Tian, RSH, RSHV}.

One ecologically important phenomenon emerging from these interactions is the occurrence of \textit{harmful algal blooms} (HABs), which have received considerable scientific attention in recent decades \cite{Feng, Mac, Xiao}. HAB species are typically classified into two main groups: toxin producers, which can contaminate seafood or kill fish, and high-biomass producers, which can deplete oxygen and cause mass mortalities of marine organisms. Some species exhibit characteristics of both categories. Toxin-producing phytoplankton (TPP) in particular play a significant role in bloom dynamics, as the toxins they release can reduce zooplankton grazing pressure, disrupt predator-prey balance, and even cause large-scale mortality events \cite{Chatt, Kirk}. Such toxin-mediated interactions are believed to be one of the key drivers of population succession and bloom termination, with important implications for human health, fisheries, and coastal economies.

Mathematical models provide a valuable framework for understanding these processes. In \cite{Chatt}, a continuous-time phytoplankton-zooplankton model was proposed to capture the effects of both predation and toxin release:
\begin{equation}\label{chat}
\left\{
\begin{aligned}
&\frac{dP}{dt} = bP\left(1 - \frac{P}{K}\right) - \alpha f(P)Z, \\
&\frac{dZ}{dt} = \beta f(P)Z - rZ - \theta g(P)Z,
\end{aligned}
\right.
\end{equation}
where \(P\) denotes the density of the TPP population and \(Z\) the density of the zooplankton population. Here, \(f(P)\) represents the predation functional response, and \(g(P)\) represents the distribution of toxic substances. The parameters \(b>0\) and \(K>0\) are the intrinsic growth rate and carrying capacity of phytoplankton; \(\alpha>0\) is the zooplankton predation rate; \(\beta>0\) is the conversion efficiency of consumed phytoplankton into zooplankton biomass; \(r>0\) is the natural mortality rate of zooplankton; and \(\theta>0\) quantifies toxin-induced zooplankton mortality.

While continuous-time models like~\eqref{chat} are well studied, many ecological processes, such as seasonal plankton blooms--naturally occur in discrete intervals, or are observed through periodic sampling. \textit{Discrete-time dynamical systems} are therefore an effective alternative, capable of capturing complex behaviors such as period-doubling cascades, quasiperiodicity, and chaos. These models can reveal rich dynamical structures that are often inaccessible to their continuous-time counterparts.

Within this framework, \textit{bifurcation theory} provides a powerful tool for exploring how qualitative changes in system behavior occur as parameters vary. Of particular interest in predator-prey systems is the \textit{Neimark-Sacker} (N-S) bifurcation, the discrete analogue of the Hopf bifurcation. This occurs when a fixed point loses stability through a pair of complex-conjugate eigenvalues crossing the unit circle, leading to the emergence of a closed invariant curve. Ecologically, such a transition corresponds to a shift from steady coexistence to sustained quasiperiodic oscillations, interpretable as persistent, irregular bloom cycles. Detecting and analyzing N-S bifurcations in toxin-mediated plankton models not only deepens the theoretical understanding of nonlinear ecosystem dynamics but also offers insight into the mechanisms driving real-world bloom variability.

Functional responses describe how predator consumption rates vary with prey density and are fundamental to modeling plankton-zooplankton interactions. Common forms include Holling type~I (linear), type~II (saturating), and type~III (sigmoidal), each capturing different ecological mechanisms such as handling time limitations or predator switching behavior. In system~\eqref{chat}, the specific forms of $f(P)$ and $g(P)$ play a decisive role in determining stability properties and bifurcation scenarios.

In the original work~\cite{Chatt}, where model~\eqref{chat} was introduced, nine distinct cases were proposed, corresponding to different combinations of functional forms for \(f(P)\) and \(g(P)\). In~\cite{Chen}, two of these cases (case~5 and case~7) were investigated in continuous time. The discrete-time counterparts of cases~1 through~8 were studied by Shoyimardonov in~\cite{SH, SH-2, SH-3, SH-4, SH-5}, where, for each case, the existence and classification of positive fixed points were established, the occurrence of Neimark–Sacker bifurcations was demonstrated, and the global dynamics were analyzed.

In the present work, we focus on the discrete-time version of model~\eqref{chat} with Holling type~III grazing and Holling type~II toxin release, corresponding to the remaining case in~\cite{Chatt}. The functional responses are given by
\[
f(P) = \frac{P^2}{\gamma^2 + P^2}, \quad g(P) = \frac{P}{\gamma + P},
\]
where \( \gamma > 0 \) is the half-saturation constant. This choice reflects a biologically realistic scenario in which predator consumption accelerates slowly at low prey densities and then saturates at high prey densities (Holling type~III), while toxin effects increase rapidly and approach saturation at higher prey levels (Holling type~II).

We denote
$$\overline{t}=bt, \, \overline{u}=\frac{P}{K}, \, \overline{v}=\frac{\alpha Z}{b}, \, \overline{\gamma}=\frac{\gamma}{K}, \, \overline{\beta}=\frac{\beta}{b}, \, \overline{r}=\frac{r}{b}, \, \overline{\theta}=\frac{\theta}{b}.$$

Then by dropping the overline sign at time $t\geq0$ we get

\begin{equation}\label{chenn}
\left\{\begin{aligned}
&\frac{du}{dt}=u(1-u)-\frac{u^2v}{\gamma^2+u^2}\\
&\frac{dv}{dt}=\frac{\beta u^2v}{\gamma^2+u^2}-rv-\frac{\theta uv}{\gamma+u}.\\
\end{aligned}\right.
\end{equation}

We consider the discrete-time version of the model \eqref{chat}, which takes the following form:
\begin{equation}\label{h12}
\begin{cases}
u^{(1)} = u(2 - u) - \frac{u^2v}{\gamma^2+u^2}, \\[2mm]
v^{(1)} = \frac{\beta u^2v}{\gamma^2+u^2}+ (1 - r)v - \dfrac{\theta uv}{\gamma + u},
\end{cases}
\end{equation}
where \( u \) and \( v \) represent the scaled densities of phytoplankton and zooplankton, respectively. Throughout this work, we assume that all parameters \( r, \beta, \theta, \gamma \) are positive.

In this paper, we investigate the dynamics of system~\eqref{h12}. Section~2 addresses the existence of positive fixed points, and Section~3 classifies their nature based on stability properties. Section~4 examines the occurrence of Neimark-Sacker bifurcations, while Section~5 explores the global dynamics of the model. Numerical simulations supporting and illustrating the analytical results are given in Section~6.

\section{Existence of positive fixed points}

It is clear that \( (0,0) \) and \( (1,0) \) are fixed points of system~\eqref{h12}.
To determine the positive fixed points, we set \( v^{(1)} = v \) and obtain the condition:

\begin{equation}\label{beta}
\beta = \Psi(u) := \frac{[ru+\theta u + r\gamma](\gamma^2 + u^2)}{u^2(\gamma + u)}.
\end{equation}

Some basic properties of the function \( \Psi(u) \) are as follows:
\[
\lim_{u \to 0^+} \Psi(u) = +\infty, \qquad \Psi(1) = \frac{( r +\theta+ r\gamma)(\gamma^2 + 1)}{\gamma + 1}.
\]
and the derivative of \( \Psi(u) \) is:
\begin{equation*}
\Psi'(u) = \frac{\gamma \left( \theta u^3 - 2 \gamma(r +\theta) u^2 - \gamma^2(4r +\theta) u - 2r \gamma^3 \right)}{u^3 (u + \gamma)^2}.
\end{equation*}

We now denote

\begin{equation}\label{hx}
h(u)=\theta u^3 - 2 \gamma(r +\theta) u^2 - \gamma^2(4r +\theta) u - 2r \gamma^3
\end{equation}

\[
h(0)=-2r \gamma^3<0, \ \ h(1)=\theta(1-2\gamma-\gamma^2)-2r\gamma(\gamma+1)^2
\]

We compute the derivative of the function \( h(u) \):
\begin{equation}\label{hder}
h'(u) = 3\theta u^2 - 4\gamma(r + \theta) u - \gamma^2 (4r + \theta).
\end{equation}

The discriminant of the quadratic equation \eqref{hder} is given by
\[
D = 4\gamma^2\left(7\theta^2 + 20r\theta + 4r^2\right) > 0,
\]
which implies that \( h'(u) = 0 \) always has two distinct real roots:
\[
u_1 = \frac{2\gamma(r+\theta) - \gamma\sqrt{7\theta^2 + 20r\theta + 4r^2}}{3\theta} < 0, \quad
u_2 = \frac{2\gamma(r+\theta) + \gamma\sqrt{7\theta^2 + 20r\theta + 4r^2}}{3\theta} > 0.
\]

Since \( h(0) = -2r\gamma^3 < 0 \) and the function \( h(u) \) attains a local maximum at the point \( u_1 < 0 \), we conclude that the local minimum value \( h(u_2) \) is always negative. Therefore, the equation \( h(u) = 0 \) has a unique positive root, which we denote by \( \widehat{u} \).

\begin{thm}\label{thm1}
The operator~\eqref{h12} has positive fixed points according to the following cases:
\begin{itemize}
    \item[(i)] If \( \beta > \Psi(1) \) and \( (r, \gamma, \theta) \in R_i \) for some \( i \in \{1,2,3,4\} \), then there exists a \textbf{unique} positive fixed point, where
    \begin{align*}
        R_1 &= \left\{(r, \gamma, \theta) \in \mathbb{R}_{+}^3: 0 < \gamma < \sqrt{7} - 2,\quad 0 < \theta \leq \frac{4r\gamma(1+\gamma)}{3 - 4\gamma - \gamma^2} \right\}, \\
        R_2 &= \left\{(r, \gamma, \theta) \in \mathbb{R}_{+}^3: \gamma \geq \sqrt{7} - 2,\quad \theta > 0 \right\}, \\
        R_3 &= \left\{(r, \gamma, \theta) \in \mathbb{R}_{+}^3: 0 < \gamma < \sqrt{2} - 1,\quad \frac{4r\gamma(1+\gamma)}{3 - 4\gamma - \gamma^2} < \theta < \frac{2r\gamma(1 + \gamma)^2}{1 - 2\gamma - \gamma^2} \right\}, \\
        R_4 &= \left\{(r, \gamma, \theta) \in \mathbb{R}_{+}^3: \sqrt{2} - 1 \leq \gamma < \sqrt{7} - 2,\quad \theta > \frac{4r\gamma(1+\gamma)}{3 - 4\gamma - \gamma^2} \right\}.
    \end{align*}

    \item[(ii)] Let \( (r, \gamma, \theta) \in R_5 \), where
    \[
    R_5 = \left\{(r, \gamma, \theta) \in \mathbb{R}_{+}^3: 0 < \gamma < \sqrt{2} - 1,\quad \theta > \frac{2r\gamma(1+\gamma)^2}{1 - 2\gamma - \gamma^2} \right\}.
    \]
    Then:
    \begin{itemize}
        \item[(ii.a)] If \( \beta > \Psi(1) \) or \( \beta = \Psi(\widehat{u}) \), then there exists a \textbf{unique} positive fixed point.
        \item[(ii.b)] If \( \Psi(\widehat{u}) < \beta < \Psi(1) \), then there exist \textbf{two} positive fixed points.
    \end{itemize}
\end{itemize}
\end{thm}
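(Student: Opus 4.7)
My plan is to reduce the count of positive fixed points to a one-variable counting problem for the level sets of $\Psi$ on $(0,1)$, and then read off the five regions from the shape of $\Psi$.

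First I would reformulate the fixed-point system. Setting $u^{(1)}=u$ in the first equation of \eqref{h12} and solving for $v$ gives $v=(1-u)(\gamma^2+u^2)/u$, which is positive precisely when $u\in(0,1)$. The second equation $v^{(1)}=v$, after dividing by $v>0$, is exactly $\beta=\Psi(u)$ as in \eqref{beta}. Hence positive fixed points are in bijection with roots of $\Psi(u)=\beta$ in the open interval $(0,1)$, with $v$ recovered from the formula above.

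Next I would invoke the monotonicity structure of $\Psi$ already established in the excerpt: since $\Psi'(u)=\gamma h(u)/[u^3(u+\gamma)^2]$, the sign of $\Psi'$ matches that of $h$, and the unique positive root $\widehat{u}$ of $h$ is the global minimizer of $\Psi$ on $(0,\infty)$, with $\Psi$ strictly decreasing on $(0,\widehat{u})$ and strictly increasing on $(\widehat{u},\infty)$. Consequently the shape of $\Psi$ on $(0,1)$ is controlled entirely by the position of $\widehat{u}$ relative to $1$, equivalently by the sign of $h(1)=\theta(1-2\gamma-\gamma^2)-2r\gamma(1+\gamma)^2$. When $h(1)<0$ (so $\widehat{u}>1$), $\Psi$ is strictly decreasing on $(0,1)$ with range $(\Psi(1),+\infty)$, and the IVT yields a unique solution iff $\beta>\Psi(1)$, giving part (i). When $h(1)>0$ (so $\widehat{u}\in(0,1)$), $\Psi$ is V-shaped on $(0,1)$, taking values from $+\infty$ down to $\Psi(\widehat{u})$ and back up to $\Psi(1)$, and the IVT then distinguishes the three subcases in part (ii) by comparing $\beta$ with $\Psi(\widehat{u})$ and $\Psi(1)$.

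What remains is bookkeeping: matching each $R_i$ with the correct sign of $h(1)$. Regions $R_2$ and $R_4$ both satisfy $\gamma\geq\sqrt{2}-1$, so $1-2\gamma-\gamma^2\leq 0$ and $h(1)<0$ is automatic; $R_3$ is defined by $\theta<2r\gamma(1+\gamma)^2/(1-2\gamma-\gamma^2)$, which directly encodes $h(1)<0$; and $R_5$ is the complementary strict inequality, giving $h(1)>0$. The only step needing care is $R_1$ in the sub-case $\gamma<\sqrt{2}-1$, where the $R_1$-bound $\theta\leq 4r\gamma(1+\gamma)/(3-4\gamma-\gamma^2)$ must be shown to imply $h(1)<0$; this follows from the inequality
\[
\frac{4r\gamma(1+\gamma)}{3-4\gamma-\gamma^2}<\frac{2r\gamma(1+\gamma)^2}{1-2\gamma-\gamma^2},
\]
which, after clearing denominators, reduces to $(1-\gamma)(\gamma^2+4\gamma+1)>0$, visibly true on $(0,1)$. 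The principal obstacle is therefore not conceptual but organizational: tracking the two critical thresholds $\gamma=\sqrt{2}-1$ and $\gamma=\sqrt{7}-2$ at which the denominators $1-2\gamma-\gamma^2$ and $3-4\gamma-\gamma^2$ change sign, and verifying this single algebraic comparison so that all of $R_1,\dots,R_4$ fall into the monotone case while $R_5$ falls into the V-shaped one.
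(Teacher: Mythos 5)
Your proposal is correct and follows essentially the same route as the paper: reduce positive fixed points to solutions of $\beta=\Psi(u)$ on $(0,1)$ via $v=(1-u)(\gamma^2+u^2)/u$, and count solutions from the monotone versus V-shaped profile of $\Psi$, which is governed by the sign of $h$. The only difference is bookkeeping: the paper derives the regions $R_1,\dots,R_5$ by solving the case conditions $u_2\geq 1$, $u_2<1$ with $h(1)\leq 0$, and $u_2<1$ with $h(1)>0$, whereas you observe that the sign of $h(1)$ alone decides the shape of $\Psi$ on $(0,1)$ and verify directly (including the factorization $(1-\gamma)(\gamma^2+4\gamma+1)>0$ for $R_1$) that each stated region carries the appropriate sign — a correct and slightly streamlined version of the same argument.
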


\begin{figure}
  \centering
  % Requires \usepackage{graphicx}
  \includegraphics[width=1\textwidth]{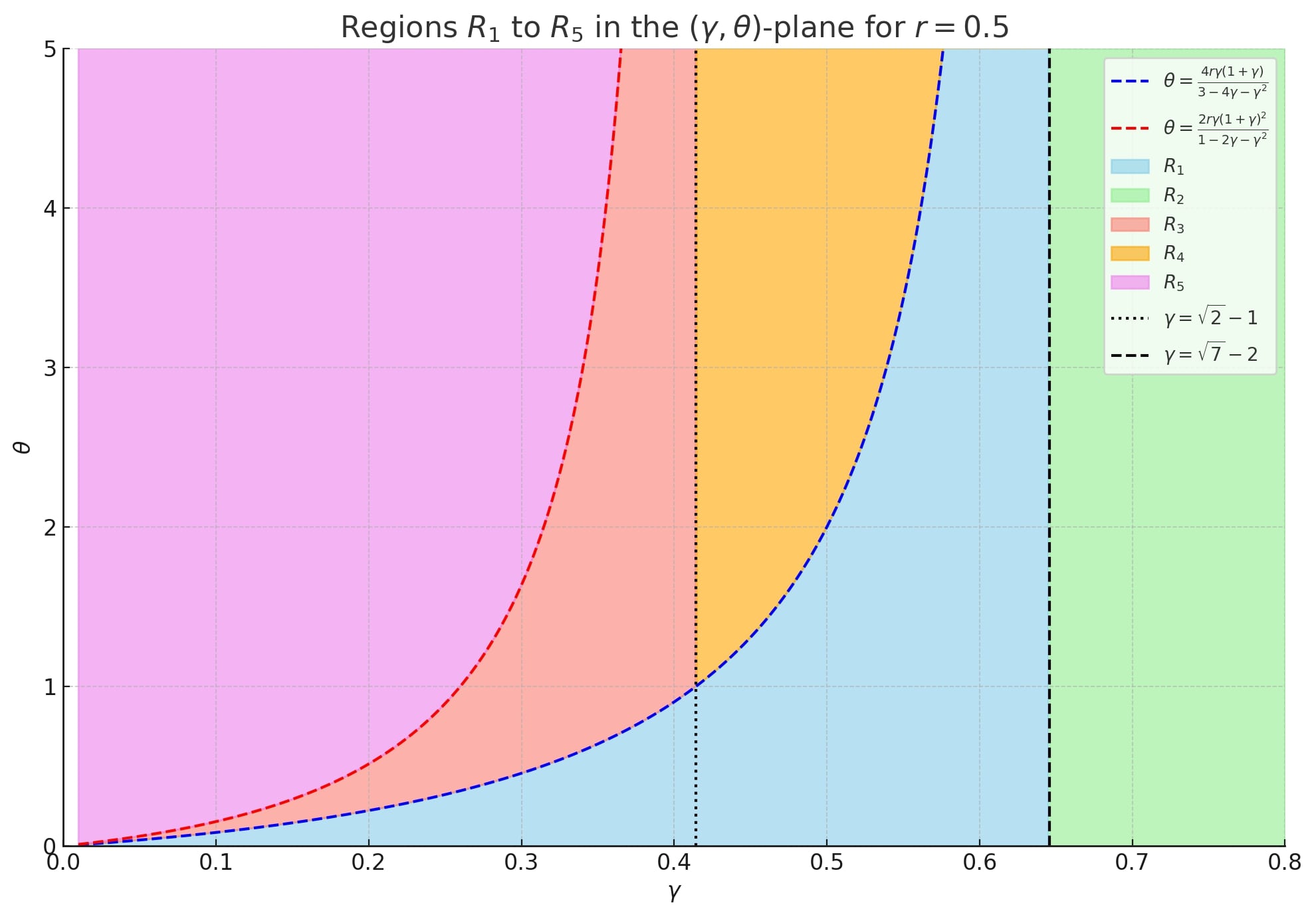}\\
   \caption{
    Partition of the $(\gamma, \beta)$-parameter space into five regions $R_1$ through $R_{5}$ corresponding to different numbers and configurations of positive fixed points of the system~\eqref{h12} for fixed $r = 0.5$.
    }\label{figr}
\end{figure}

\begin{proof} From the fixed point equation \( u^{(1)} = u \), we obtain the corresponding value of \( v \) as
\begin{equation}\label{v}
v = \frac{(1 - u)(\gamma^2 + u^2)}{u}.
\end{equation}
Therefore, we seek solutions of equation~\eqref{beta} that lie in the interval \( (0,1) \).

It is evident that if \( u_2 \geq 1 \), or if \( u_2 < 1 \) and \( h(1) \leq 0 \), then the equation \( h(u) = 0 \) has no solution in the interval \( (0,1) \), which implies that \( h(u) < 0 \) for all \( u \in (0,1) \). Consequently, the function \( \Psi(u) \) is strictly decreasing on \( (0,1) \), and the operator~\eqref{h12} admits a unique positive fixed point for every \( \beta > \Psi(1) \).

On the other hand, if \( u_2 < 1 \) and \( h(1) > 0 \), then the equation \( h(u) = 0 \) has a unique solution \( \widehat{u} \in (0,1) \). In this case, the function \( \Psi(u) \) is strictly decreasing on \( (0, \widehat{u}) \) and strictly increasing on \( (\widehat{u}, 1) \), so \( \Psi(\widehat{u}) \) is the global minimum of \( \Psi \) on \( (0,1) \). Therefore, the operator~\eqref{h12} has:
\begin{itemize}
    \item a unique positive fixed point if \( \beta > \Psi(1) \) or \( \beta = \Psi(\widehat{u}) \),
    \item exactly two positive fixed points if \( \Psi(\widehat{u}) < \beta < \Psi(1) \).
\end{itemize}

We now derive the parameter conditions corresponding to each of these cases.

\textbf{Case 1:} \( u_2 \geq 1 \). Solving this inequality yields:
\[
(\gamma^2 + 4\gamma - 3)\theta + 4r\gamma(1 + \gamma) \geq 0.
\]
The solution set is given by:
\[
0 < \gamma < \sqrt{7} - 2, \quad 0 < \theta \leq \frac{4r\gamma(1 + \gamma)}{3 - 4\gamma - \gamma^2},
\]
or
\[
\gamma \geq \sqrt{7} - 2, \quad \theta > 0,
\]
where \( r > 0 \) is any positive parameter.

\textbf{Case 2:} \( u_2 < 1 \) and \( h(1) \leq 0 \). Note that
\[
h(1) = \theta(1 - 2\gamma - \gamma^2) - 2r\gamma(1 + \gamma)^2.
\]
Solving the inequalities \( u_2 < 1 \) and \( h(1) \leq 0 \), we obtain the conditions:
\[
0 < \gamma < \sqrt{2} - 1, \quad \frac{4r\gamma(1 + \gamma)}{3 - 4\gamma - \gamma^2} < \theta \leq \frac{2r\gamma(1 + \gamma)^2}{1 - 2\gamma - \gamma^2},
\]
or
\[
\sqrt{2} - 1 \leq \gamma < \sqrt{7} - 2, \quad \theta > \frac{4r\gamma(1 + \gamma)}{3 - 4\gamma - \gamma^2}.
\]

\textbf{Case 3:} \( u_2 < 1 \) and \( h(1) > 0 \). Then the equation \( h(u) = 0 \) has a unique solution \( \widehat{u} \in (0,1) \), and we obtain:
\[
0 < \gamma < \sqrt{2} - 1, \quad \theta > \frac{2r\gamma(1 + \gamma)^2}{1 - 2\gamma - \gamma^2}.
\]
Figure~\ref{figr} illustrates all the parameter regions in the \( (\gamma, \theta) \)-plane corresponding to the specific value \( r = 0.5 \). This completes the proof.
\end{proof}

\section{Type of fixed points}

The following lemma is useful for the analysis of the fixed points.

\begin{lemma}[Lemma 2.1, \cite{Cheng}]\label{lem1}
Let \( F(\lambda) = \lambda^2 + B\lambda + C \), where \( B \) and \( C \) are real constants. Suppose \( \lambda_1 \) and \( \lambda_2 \) are the roots of the equation \( F(\lambda) = 0 \). Then the following statements hold:
\begin{itemize}
    \item[(i)] If \( F(1) > 0 \), then:
    \begin{itemize}
        \item[(i.1)] \( |\lambda_1| < 1 \) and \( |\lambda_2| < 1 \) if and only if \( F(-1) > 0 \) and \( C < 1 \);
        \item[(i.2)] \( \lambda_1 = -1 \) and \( \lambda_2 \neq -1 \) if and only if \( F(-1) = 0 \) and \( B \neq 2 \);
        \item[(i.3)] \( |\lambda_1| < 1 \), \( |\lambda_2| > 1 \) if and only if \( F(-1) < 0 \);
        \item[(i.4)] \( |\lambda_1| > 1 \) and \( |\lambda_2| > 1 \) if and only if \( F(-1) > 0 \) and \( C > 1 \);
        \item[(i.5)] \( \lambda_1 \) and \( \lambda_2 \) are complex conjugates with \( |\lambda_1| = |\lambda_2| = 1 \) if and only if \( -2 < B < 2 \) and \( C = 1 \);
        \item[(i.6)] \( \lambda_1 = \lambda_2 = -1 \) if and only if \( F(-1) = 0 \) and \( B = 2 \).
    \end{itemize}

    \item[(ii)] If \( F(1) = 0 \), i.e., \( 1 \) is a root of \( F(\lambda) = 0 \), then the other root \( \lambda \) satisfies:
    \[
    |\lambda| \begin{cases}
    < 1, & \text{if } |C| < 1, \\
    = 1, & \text{if } |C| = 1, \\
    > 1, & \text{if } |C| > 1.
    \end{cases}
    \]

    \item[(iii)] If \( F(1) < 0 \), then the equation \( F(\lambda) = 0 \) has one root in the interval \( (1, \infty) \). Moreover:
    \begin{itemize}
        \item[(iii.1)] The other root \( \lambda \) satisfies \( \lambda \leq -1 \) if and only if \( F(-1) \leq 0 \);
        \item[(iii.2)] The other root \( \lambda \) satisfies \( -1 < \lambda < 1 \) if and only if \( F(-1) > 0 \).
    \end{itemize}
\end{itemize}
\end{lemma}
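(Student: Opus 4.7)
The plan is to reduce everything to two elementary identities,
\[
F(1) = (1-\lambda_1)(1-\lambda_2), \qquad F(-1) = (1+\lambda_1)(1+\lambda_2),
\]
combined with Vieta's relations $\lambda_1 + \lambda_2 = -B$ and $\lambda_1\lambda_2 = C$. Every clause of the lemma then follows by a sign-chasing argument on these three quantities, with the discriminant $\Delta = B^2 - 4C$ used to distinguish real from complex-conjugate roots.

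For part (i), the hypothesis $F(1) > 0$ forces $(1-\lambda_1)(1-\lambda_2) > 0$, which rules out the configuration of one real root on each side of $1$. I would first dispose of the complex-conjugate case: if $\Delta < 0$, then $\lambda_2 = \overline{\lambda_1}$ and $C = |\lambda_1|^2 \geq 0$, so $|\lambda_i| = 1$ iff $C = 1$; the requirement $\Delta < 0$ at $C = 1$ reads $B^2 < 4$, i.e.\ $-2 < B < 2$, yielding (i.5). For real roots, $F(1) > 0$ together with $F(-1) > 0$ implies the roots either both lie in $(-1,1)$ or both lie outside $[-1,1]$ on the same side of the real line, and the sign of $C - 1 = \lambda_1\lambda_2 - 1$ separates these alternatives, giving (i.1) and (i.4). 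Clauses (i.2), (i.3), (i.6) concern interaction with $-1$: $F(-1) = 0$ means $-1$ is a root and Vieta gives the other as $-C$; additionally $\lambda_1 = \lambda_2 = -1$ requires $-B = -2$, i.e.\ $B = 2$. Finally, $F(-1) < 0$ means exactly one of $1 + \lambda_1$, $1 + \lambda_2$ is negative, so one real root lies below $-1$; combined with $F(1) > 0$ (both roots on the same side of $1$), the other must lie in $(-1, 1)$, giving (i.3).

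Part (ii) is immediate: if $F(1) = 0$, then $1$ is a root of $F$, and Vieta forces the companion root to equal $C$, whose magnitude is simply $|C|$. For part (iii), $F(1) < 0$ is incompatible with a complex-conjugate pair of roots, since then $(1-\lambda_1)(1-\lambda_2) = |1-\lambda_1|^2 \geq 0$; hence both roots are real and lie on opposite sides of $1$, producing one root in $(1,\infty)$. The location of the other — either $\leq -1$ or in $(-1,1)$ — is then governed by the sign of $F(-1) = (1+\lambda_1)(1+\lambda_2)$, because the factor corresponding to the large root is positive.

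The main obstacle is not conceptual but combinatorial: the argument is entirely elementary, yet one easily slips at boundary subcases (discriminant zero, $-1$ a double root, the distinction between $C < 1$ and $|C| < 1$) or double-counts scenarios. Careful bookkeeping of the signs of $F(1)$, $F(-1)$, $C-1$, and $\Delta$, treating the complex-conjugate case separately wherever it can coexist with the hypothesis, is the only real work.
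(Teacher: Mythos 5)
Your proposal is correct. Note, however, that the paper itself contains no proof of this statement: it is quoted as Lemma~2.1 of the cited reference (Cheng and Wang), and is the standard Jury/Schur--Cohn criterion for a real quadratic, so there is no internal argument to compare yours against. What you supply is a legitimate self-contained derivation: the factorizations \(F(1)=(1-\lambda_1)(1-\lambda_2)\) and \(F(-1)=(1+\lambda_1)(1+\lambda_2)\), together with \(\lambda_1\lambda_2=C\), \(\lambda_1+\lambda_2=-B\) and the sign of the discriminant, do yield every clause: in part (i) the hypothesis \(F(1)>0\) excludes real roots separated by \(1\), so with \(F(-1)>0\) the real-root configurations are ``both in \((-1,1)\)'' or ``both beyond the same endpoint of \([-1,1]\)'', which the sign of \(C-1\) separates (and the complex-conjugate case is governed by \(C=|\lambda_1|^2\)); \(F(-1)<0\) forces one real root below \(-1\) and, because of \(F(1)>0\), the other in \((-1,1)\); the clauses involving \(F(-1)=0\) follow from Vieta as you say. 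Part (ii) is immediate since the companion root of \(1\) equals \(C\), and in part (iii) the inequality \(F(1)<0\) rules out a conjugate pair (there \(F(1)=|1-\lambda_1|^2\ge 0\)), gives one real root in \((1,\infty)\), and the positivity of the factor \(1+\lambda_2>2\) reduces the location of the other root to the sign of \(F(-1)\). The only imprecision is the phrase ``both lie outside \([-1,1]\) on the same side of the real line'' in your treatment of (i.1)/(i.4), which should read ``on the same side of the interval \([-1,1]\)''; the surrounding sign argument makes the intent clear, and the boundary subcases you flag (double root at \(-1\), \(C=1\)) are indeed exactly where care is needed, but your handling of them via (i.2), (i.5), (i.6) is consistent.
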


\begin{pro}\label{prop1}
For the fixed points \( (0, 0) \) and \( (1, 0) \) of system~\eqref{h12}, the following statements hold:
\[
(0, 0) =
\begin{cases}
\text{nonhyperbolic}, & \text{if } r = 2, \\[2mm]
\text{saddle}, & \text{if } 0 < r < 2, \\[2mm]
\text{repelling}, & \text{if } r > 2,
\end{cases}
\]
\[
(1, 0) =
\begin{cases}
\text{nonhyperbolic}, & \text{if } \theta = \left(\frac{\beta}{1+\gamma^2}-r\right)(1+\gamma) \text{ or } \theta= \left(\frac{\beta}{1+\gamma^2}-r+2\right)(1+\gamma), \\[2mm]
\text{attractive}, & \text{if } \left(\frac{\beta}{1+\gamma^2}-r\right)(1+\gamma) < \theta < \left(\frac{\beta}{1+\gamma^2}-r+2\right)(1+\gamma), \\[2mm]
\text{saddle}, & \text{otherwise}.
\end{cases}
\]
\end{pro}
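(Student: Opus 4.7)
The plan is direct: at each of the two boundary fixed points I would compute the Jacobian of the right-hand side of \eqref{h12} and read off its eigenvalues. The key structural observation is that every term of $v^{(1)}$ carries a factor of $v$, and most cross-derivatives in $u^{(1)}$ also vanish at $v=0$, so both Jacobians turn out to be either diagonal or upper-triangular. Consequently, Lemma \ref{lem1} is not actually needed here; the eigenvalues are visible on the diagonal and the classification is read off from their moduli.

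At $(0,0)$, a short computation gives
\[
J(0,0) \;=\; \begin{pmatrix} 2 & 0 \\ 0 & 1-r \end{pmatrix},
\]
with eigenvalues $2$ and $1-r$. Since $|2|>1$ always, the origin cannot be attractive, and the classification reduces to comparing $|1-r|$ with $1$. The cases $0<r<2$, $r=2$, and $r>2$ correspond to $|1-r|<1$, $|1-r|=1$, and $|1-r|>1$, producing the saddle, nonhyperbolic, and repelling classifications respectively.

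At $(1,0)$ the Jacobian is upper-triangular with diagonal entries $0$ and
\[
\lambda_2 \;=\; 1 - r + \frac{\beta}{1+\gamma^2} - \frac{\theta}{1+\gamma},
\]
because $\partial_u u^{(1)}|_{(1,0)} = 2-2 = 0$ and every $u$-derivative of the second component is proportional to $v$. The trivial eigenvalue $0$ lies strictly inside the unit disk, so the type of $(1,0)$ is governed purely by $\lambda_2$: attractive when $|\lambda_2|<1$, saddle when $|\lambda_2|>1$, and nonhyperbolic when $\lambda_2=\pm 1$. Inverting $\lambda_2<1$ for $\theta$ yields the lower threshold $\bigl(\tfrac{\beta}{1+\gamma^2}-r\bigr)(1+\gamma)$, and inverting $\lambda_2>-1$ yields the upper threshold $\bigl(\tfrac{\beta}{1+\gamma^2}-r+2\bigr)(1+\gamma)$; equality at either endpoint gives the two nonhyperbolic boundaries listed in the statement.

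I do not expect any substantive obstacle: the triangular structure of both Jacobians makes eigenvalue extraction immediate. The only care required is sign bookkeeping when translating the inequalities $-1<\lambda_2<1$ into inequalities on $\theta$ (since multiplying through by the positive quantity $(1+\gamma)$ preserves direction), and the observation that the zero eigenvalue at $(1,0)$ does not obstruct hyperbolicity because $|0|<1$.
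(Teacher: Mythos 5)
Your proposal is correct and follows essentially the same route as the paper: evaluate the Jacobian \eqref{jac01} at $(0,0)$ and $(1,0)$, exploit the triangular structure (the $(2,1)$ entry vanishes since every term of $v^{(1)}$ carries a factor of $v$), and classify by the moduli of the diagonal eigenvalues $2,\,1-r$ and $0,\,1-r+\frac{\beta}{1+\gamma^2}-\frac{\theta}{1+\gamma}$. In fact you spell out the translation of $-1<\lambda_2<1$ into the two $\theta$-thresholds, a step the paper leaves implicit, and like the paper you correctly avoid invoking Lemma~\ref{lem1} here.
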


\begin{proof}
The Jacobian matrix of the operator \eqref{h12} is given by
\begin{equation}\label{jac01}
J(u,v) =
\begin{bmatrix}
2 - 2u-\frac{2\gamma^2 uv}{(\gamma^2+u^2)^2}  & -\frac{u^2}{\gamma^2+u^2} \\
\gamma v \left( \frac{2\beta \gamma u }{(\gamma^2+u^2)^2} - \frac{\theta}{(\gamma+u)^2} \right) & 1 - r + \frac{ \beta u^2}{ \gamma^2+u^2} - \frac{\theta u }{ \gamma+u}
\end{bmatrix}.
\end{equation}

Evaluating the Jacobian at the fixed point \( (0,0) \), we obtain
\[
J(0,0) =
\begin{bmatrix}
2 & 0 \\
0 & 1 - r
\end{bmatrix},
\]
whose eigenvalues are \( \lambda_1 = 2 \) and \( \lambda_2 = 1 - r \). Since \( \lambda_1 = 2 > 1 \), the instability of the fixed point \( (0,0) \) is immediate.

Next, consider the fixed point \( (1,0) \). The Jacobian at this point becomes
\[
J(1,0) =
\begin{bmatrix}
0 & -\dfrac{1}{1+\gamma^2} \\
0 & 1 - r+\dfrac{\beta}{1+\gamma^2} - \dfrac{\theta}{1+\gamma}
\end{bmatrix},
\]
which is a lower triangular matrix. The eigenvalues are therefore
\[
\lambda_1 = 0, \quad \lambda_2 = 1 - r+\dfrac{\beta}{1+\gamma^2} - \dfrac{\theta}{1+\gamma} .
\]

Stability of the fixed point \( (1,0) \) depends on the condition \( |\lambda_2| < 1 \), while \( |\lambda_2| = 1 \) indicates a bifurcation threshold. These yield conditions on the parameters \( \beta, \theta, r, \gamma \), which determine the local dynamics near \( (1,0) \).

This completes the proof.
\end{proof}

Now we classify the types of positive fixed points. Let $E_1=(u_{1}, v_{1})$ and $E_2=(u_{2}, v_{2})$ be positive fixed points of the operator \eqref{h12} such that $u_{1} < u_{2}$, where $u_i$ is a solution of the equation $\theta=\Psi(u)$ in (0,1) and $v_i=\frac{(1-u_i)(\gamma^2+u_i^2)}{u_i}$ for any $i=1,2.$ Then using them we simplify the Jacobian of the operator \eqref{h12} at any positive fixed point:

\begin{equation}\label{jac}
J(u_i) =
\begin{bmatrix}
\frac{2u_i^2(1-u_i)}{\gamma^2+u_i^2} &-\frac{u_i^2}{\gamma^2+u_i^2} \\
\frac{\gamma (1-u_i)(\gamma^2+u_i^2) }{u_i}\cdot\left( \frac{2\beta \gamma u_i }{(\gamma^2+u_i^2)^2} - \frac{\theta}{(\gamma+u_i)^2} \right) &  1
\end{bmatrix}.
\end{equation}

Its characteristic polynomial is

\begin{equation}\label{F}
\mathcal{P}(\lambda,u_i)=\lambda^2-p(u_i)\lambda+q(u_i)
\end{equation}

where
\begin{align}
p(u_i) &= 1 + \frac{2u_i^2(1-u_i)}{\gamma^2+u_i^2}, \label{pu}\\
q(u_i) &= \frac{2u_i^2(1-u_i)}{\gamma^2+u_i^2}
+ \gamma u_i(1 - u_i) \left( \frac{2\beta \gamma u_i }{(\gamma^2+u_i^2)^2} - \frac{\theta}{(\gamma+u_i)^2} \right) \label{qu}
\end{align}

The following theorem classifies positive fixed points of the operator (\ref{h12}).

\begin{thm}\label{type}
Let $\mathcal{P}(\lambda,u_i)$ be defined as in~\eqref{F}, and let \( q(u_i) \) be defined as in~\eqref{qu}.
Consider the fixed points $E_i = (u_i, v_i)$ for $i = 1, 2$ of the operator~\eqref{h12}. Then the following statements hold:

\begin{itemize}
    \item[(i)] The type of the fixed point $E_1$ is determined by:
    \[
    E_1 =
    \begin{cases}
    \text{attracting}, & \text{if } q(u_1) < 1, \\
    \text{repelling}, & \text{if } q(u_1) > 1, \\
    \text{nonhyperbolic}, & \text{if } q(u_1) = 1.
    \end{cases}
    \]

    \item[(ii)] The fixed point $E_2,$ if it exists, is a saddle point.
\end{itemize}
\end{thm}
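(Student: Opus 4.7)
The plan is to apply Lemma~\ref{lem1} to the characteristic polynomial $\mathcal{P}(\lambda,u_i)=\lambda^{2}-p(u_i)\lambda+q(u_i)$ with $B=-p(u_i)$ and $C=q(u_i)$; everything hinges on locating $\mathcal{P}(\pm 1,u_i)$. The first step is to simplify $\mathcal{P}(1,u_i)=1-p(u_i)+q(u_i)$ by substituting $\beta=\Psi(u_i)$ from~\eqref{beta} into the factor $2\beta\gamma u_i/(\gamma^2+u_i^2)^2-\theta/(\gamma+u_i)^2$ that sits inside $q(u_i)$. A direct algebraic reduction then yields
\[
\mathcal{P}(1,u_i)=-\frac{\gamma(1-u_i)\,h(u_i)}{(\gamma^2+u_i^2)(\gamma+u_i)^2},
\]
with $h$ the cubic from~\eqref{hx}. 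Thus $\operatorname{sgn}\mathcal{P}(1,u_i)=-\operatorname{sgn}h(u_i)$, and the monotonicity analysis of $\Psi$ in the proof of Theorem~\ref{thm1} places $u_1$ on the decreasing branch of $\Psi$ and $u_2$ on the increasing branch, giving $h(u_1)<0$ and $h(u_2)>0$, whence $\mathcal{P}(1,u_1)>0$ and $\mathcal{P}(1,u_2)<0$.

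For part~(i), since $h(u_1)<0$, every summand of
\[
\mathcal{P}(-1,u_1)=2+\frac{4u_1^{2}(1-u_1)}{\gamma^2+u_1^2}-\frac{\gamma(1-u_1)h(u_1)}{(\gamma^2+u_1^2)(\gamma+u_1)^2}
\]
is positive, so $\mathcal{P}(\pm 1,u_1)>0$. Lemma~\ref{lem1}(i) then collapses the trichotomy onto the size of $C=q(u_1)$, with cases (i.1), (i.4), (i.5) giving the attracting, repelling, and nonhyperbolic classifications according to $q(u_1)<1$, $q(u_1)>1$, and $q(u_1)=1$. The ancillary condition $-2<B<2$ required in (i.5) is automatic: $p(u_1)>1$ holds by definition~\eqref{pu}, while $\mathcal{P}(1,u_1)>0$ together with $q(u_1)=1$ forces $p(u_1)<2$.

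For part~(ii), $\mathcal{P}(1,u_2)<0$ places one eigenvalue in $(1,\infty)$ via Lemma~\ref{lem1}(iii). To conclude that $E_2$ is a saddle through (iii.2) I must verify $\mathcal{P}(-1,u_2)>0$. Using the identity $\mathcal{P}(-1,u_i)=2p(u_i)+\mathcal{P}(1,u_i)$, this reduces to the estimate
\[
2(\gamma+u_2)^{2}\left[(\gamma^2+u_2^2)+2u_2^{2}(1-u_2)\right]>\gamma(1-u_2)\,h(u_2).
\]
My intended route is to substitute $h(u_2)=\theta u_2(u_2^{2}-2\gamma u_2-\gamma^{2})-2r\gamma(u_2+\gamma)^{2}$ and eliminate $\theta$ through the fixed-point relation $\theta u_2(\gamma^2+u_2^2)=(\gamma+u_2)\left[\beta u_2^{2}-r(\gamma^2+u_2^2)\right]$, turning the inequality into a polynomial condition in $u_2,\gamma,r,\beta$ alone. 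This is the main obstacle: without the fixed-point constraint the right-hand side grows linearly in $\theta$, so the argument must exploit the hidden cancellation contained in that constraint, together with the structural bounds $u_2<1$, $u_2^{2}-2\gamma u_2-\gamma^{2}>0$ (forced by $h(u_2)>0$), and $\gamma<\sqrt{2}-1$ (coming from the existence condition in case~(ii.b) of Theorem~\ref{thm1}).
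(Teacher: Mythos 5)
Your part~(i) and your set-up for part~(ii) coincide with the paper's own proof: the same substitution of $\beta=\Psi(u_i)$ giving
\[
\mathcal{P}(1,u_i)=-\frac{\gamma(1-u_i)\,h(u_i)}{(\gamma+u_i)^2(\gamma^2+u_i^2)},
\]
the same placement $h(u_1)<0<h(u_2)$ from the location of $u_1,u_2$ relative to $\widehat{u}$, the same use of Lemma~\ref{lem1}(i) at $E_1$ (your explicit verification of $-2<B<2$ in the nonhyperbolic subcase is a detail the paper leaves implicit, and is correct since $1<p(u_1)<2$), and the same reduction of the saddle claim for $E_2$ to the two facts $\mathcal{P}(1,u_2)<0$ and $\mathcal{P}(-1,u_2)>0$.

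The genuine gap is that part~(ii) is never finished. You reduce the saddle property to the inequality
\[
2(\gamma+u_2)^{2}\bigl[(\gamma^2+u_2^2)+2u_2^{2}(1-u_2)\bigr]>\gamma(1-u_2)\,h(u_2),
\]
but then only describe an intended route (eliminate $\theta$ through the fixed-point relation and exploit $u_2<1$, $u_2^2-2\gamma u_2-\gamma^2>0$, $\gamma<\sqrt{2}-1$) and yourself label it ``the main obstacle''; no estimate is carried out, so the conclusion that $E_2$ is a saddle rests on an unverified inequality. The paper closes exactly this step by a different device: it rewrites the bound as $h(u_2)<g(u_2)$ with the auxiliary function
\[
g(x)=\frac{2(\gamma+x)^2(\gamma^2+x^2)}{\gamma(1-x)}+\frac{4x^2(\gamma+x)^2}{\gamma},
\]
establishes $g(0)=2\gamma^3>0$, $g'>0$ and $g''>0$ on $(0,1)$, and then compares the shapes of $h$ and $g$ on $(0,1)$ to conclude $h<g$ there, i.e.\ a pointwise comparison that never invokes the fixed-point relation for $\theta$. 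Your observation that $h$ grows linearly in $\theta$ at a fixed $x$ is a legitimate reason to want the fixed-point constraint in the argument, and pursuing your elimination of $\theta$ via $\theta u_2(\gamma^2+u_2^2)=(\gamma+u_2)\bigl[\beta u_2^{2}-r(\gamma^2+u_2^2)\bigr]$ would be a genuinely different (and potentially sharper) route than the paper's; but as written you would still have to derive and verify the resulting polynomial inequality in $(u_2,\gamma,r,\beta)$, and your proposal stops precisely where that work begins.
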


\begin{proof}
Let $\widehat{u} \in (0,1)$ be the unique solution of $h(u) = 0$ defined in~\eqref{hx}, and note that $\beta = \Psi(u_i)$ for all $i = 1,2$. Moreover, the function $h(u)$ satisfies
\[
h(u) < 0 \quad \text{for} \quad u \in (0, \widehat{u}), \qquad h(u) > 0 \quad \text{for} \quad u \in (\widehat{u}, 1).
\]
It is evident that
\[
u_1 \in (0, \widehat{u}), \qquad u_2 \in (\widehat{u}, 1).
\]

\begin{itemize}
    \item[(i)] We first show that $\mathcal{P}(1, u_1) > 0$ and $\mathcal{P}(1, u_2) < 0$.

    From~\eqref{F}, we have
    \[
    \mathcal{P}(1, u_i) = \gamma u_i(1 - u_i) \left( \frac{2\beta \gamma u_i }{(\gamma^2+u_i^2)^2} - \frac{\theta}{(\gamma+u_i)^2} \right).
    \]
    Using the identity $\beta = \Psi(u_i)$, this simplifies to
    \[
    \mathcal{P}(1, u_i) = -\frac{\gamma(1 - u_i) h(u_i)}{(\gamma + u_i)^2 (\gamma^2 + u_i^2)}.
    \]
    Since $h(u_1) < 0$ and $h(u_2) > 0$, it follows that $\mathcal{P}(1, u_1) > 0$ and $\mathcal{P}(1, u_2) < 0$.

    Furthermore, from $\mathcal{P}(1, u_1) > 0$, it directly follows that $\mathcal{P}(-1, u_1) > 0$. Hence, by Lemma~\ref{lem1}, this case is established.

    \item[(ii)] As shown above, $\mathcal{P}(1, u_2) < 0$, which implies that one eigenvalue of the Jacobian matrix at the fixed point \( E_2 \) lies in the interval \( (1, \infty) \). Now we show that $\mathcal{P}(-1, u_2) > 0$.

    Observe that
    \[
    \mathcal{P}(-1, u_2) = 2 + \frac{4u_2^2(1 - u_2)}{\gamma^2 + u_2^2} - \frac{\gamma(1 - u_2) h(u_2)}{(\gamma + u_2)^2 (\gamma^2 + u_2^2)}.
    \]
    Since $h(u_2) > 0$, we conclude that $\mathcal{P}(-1, u_2) > 0$ is equivalent to the inequality:
    \[
    h(u_2) < \frac{2(\gamma + u_2)^2(\gamma^2 + u_2^2)}{\gamma(1 - u_2)} + \frac{4u_2^2(\gamma + u_2)^2}{\gamma}.
    \]

    We define the following auxiliary function:
    \[
    g(x) = \frac{2(\gamma + x)^2(\gamma^2 + x^2)}{\gamma(1 - x)} + \frac{4x^2(\gamma + x)^2}{\gamma}.
    \]

    We now investigate the behavior of \( g(x) \). Clearly,
    \[
    \lim_{x \to 1^-} g(x) = +\infty, \qquad g(0) = 2\gamma^3 > 0.
    \]

    The first derivative is:
    \[
    g'(x) = \frac{2(\gamma + x)\left[x^2(8x^2 - 19x + 12) + \gamma x(4x^2 - 9x + 6) + (2 - x)\gamma^2 + \gamma^3 \right]}{\gamma(1 - x)^2}.
    \]
    It is evident that \( g'(x) > 0 \) for all \( x \in (0,1) \), which implies that \( g(x) \) is strictly increasing on \( (0,1) \).

    The second derivative is given by:
    \[
    g''(x) = \frac{4\left[a(x) + b(x)\gamma + c(x)\gamma^2 - 2\gamma^3 - \gamma^4\right]}{\gamma(1 - x)^3},
    \]
    where
    \[
    \begin{aligned}
    a(x) &= 12x^5 - 39x^4 + 44x^3 - 18x^2, \\
    b(x) &= 12x^4 - 38x^3 + 42x^2 - 18x, \\
    c(x) &= -6x^2 + 6x - 4.
    \end{aligned}
    \]
   By straightforward calculations, it can be verified that \( a(x) < 0 \), \( b(x) < 0 \), and \( c(x) < 0 \) for all \( x \in (0,1) \) (as shown in Fig.~\ref{fig1}). Consequently, we conclude that \( g''(x) > 0 \) on \( (0,1) \), and hence the function \( g(x) \) is convex on this interval.

   Since \( h(x) \) is also convex on \( (0,1) \) and \( h(0) = -2r\gamma^3 < 0 \), the graphs of \( h(x) \) and \( g(x) \) do not intersect in \( (0,1) \), which implies that \( h(x) < g(x) \) for all \( x \in (0,1) \). Hence, \( \mathcal{P}(-1, u_2) > 0 \), and by Lemma~\ref{lem1}, this case is also proved.
\end{itemize}

The proof is complete.

\begin{figure}
  \centering
  % Requires \usepackage{graphicx}
  \includegraphics[width=0.9\textwidth]{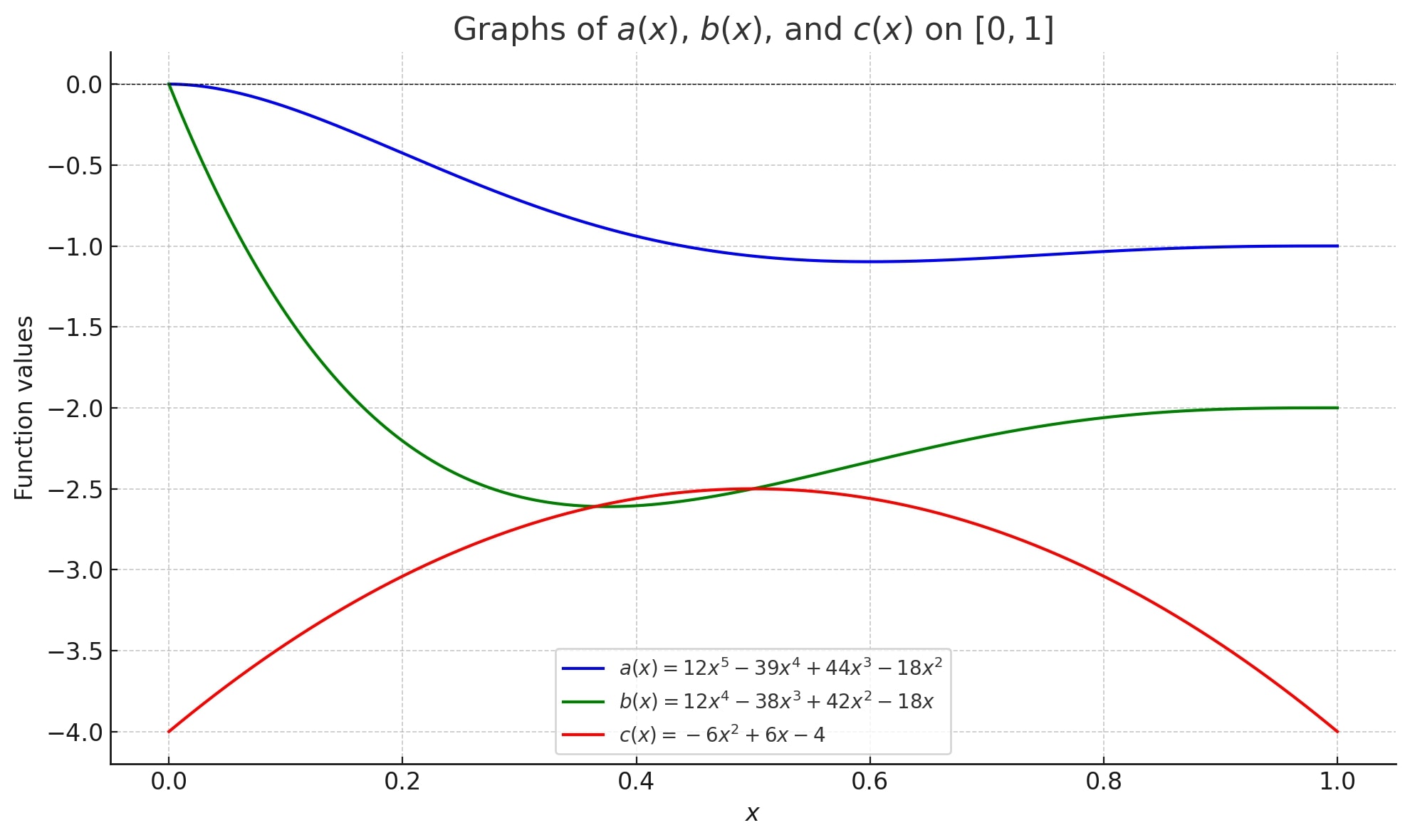}\\
   \caption{Graphs of $a(x),$ $b(x)$ and $c(x)$ on the interval [0,1].}\label{fig1}
\end{figure}

\end{proof}

\section{Neimark--Sacker Bifurcation analysis}

In this section, we determine the conditions under which a Neimark-Sacker bifurcation occurs at the positive fixed point \( E_1 = (u_1, v_1) \).

By Theorem~\ref{type} and Lemma~\ref{lem1}, the Jacobian matrix at \( E_1 \) possesses a pair of complex conjugate eigenvalues of modulus one when
\[
q(u_1) = 1,
\]
where \( q(u_i) \) is given in~\eqref{qu}.
We select \( \beta \) as the bifurcation parameter and denote by \( \beta_0 \) a solution of \( q(u_1) = 1 \).

We now prove that, as \( \beta \) varies in a sufficiently small neighborhood of \( \beta_0 \), the fixed point \( E_1 \) undergoes a Neimark-Sacker bifurcation. The proof proceeds as follows:

\textbf{Step~1.}
Introduce the change of variables
\[
x = u - u_1, \quad y = v - v_1,
\]
which shifts the fixed point \( E_1 \) to the origin.
Under this transformation, system~\eqref{h12} becomes
\begin{equation}\label{bif1}
\left\{
\begin{aligned}
x^{(1)} &= (x + u_1)\big(2 - x - u_1\big)
          - \frac{(x + u_1)^2 (y + v_1)}{\gamma^2 + (x + u_1)^2}
          - u_1, \\[2mm]
y^{(1)} &= (y + v_1) \left[
          \frac{\beta (x + u_1)^2}{\gamma^2 + (x + u_1)^2}
          - \frac{\theta (x + u_1)}{\gamma + x + u_1}
          + 1 - r
          \right]
          - v_1.
\end{aligned}
\right.
\end{equation}

\textbf{Step~2.}
Introduce a small perturbation \( \beta^* \) to the parameter \( \beta \), such that
\[
\beta = \beta_0 + \beta^*.
\]
The perturbed system can be written as
\begin{equation}\label{bif2}
\left\{
\begin{aligned}
x^{(1)} &= (x + u_1)\big(2 - x - u_1\big)
          - \frac{(x+u_1)^2 (y + v_1)}{\gamma^2 + (x + u_1)^2}
          - u_1, \\[2mm]
y^{(1)} &= (y + v_1) \left[
          \frac{(\beta_0 + \beta^*)(x + u_1)^2}{\gamma^2 + (x + u_1)^2}
          - \frac{\theta(x + u_1)}{\gamma + x + u_1}
          + 1 - r
          \right]
          - v_1.
\end{aligned}
\right.
\end{equation}

The Jacobian matrix of system~\eqref{bif2} at the origin \((0,0)\) is given by

\begin{equation}\label{jac2}
\mathcal{J}(0,0) =
\begin{bmatrix}
\frac{2u_1^2(1-u_1)}{\gamma^2+u_1^2} &-\frac{u_1^2}{\gamma^2+u_1^2} \\
\frac{\gamma (1-u_1)(\gamma^2+u_1^2) }{u_1}\cdot\left( \frac{2(\beta_0 + \beta^*) \gamma u_1 }{(\gamma^2+u_1^2)^2} - \frac{\theta}{(\gamma+u_1)^2} \right) &  1+\frac{\beta^*u_1^2}{\gamma^2+u_1^2}
\end{bmatrix}.
\end{equation}

The corresponding characteristic equation of the Jacobian at the origin is
\[
\lambda^2 - \nu(\beta^*)\,\lambda + \omega(\beta^*) = 0,
\]
where the trace and determinant are
\[
\nu(\beta^*) = \operatorname{Tr}(\mathcal{J})
= 1 + \frac{2u_1^2(1-u_1)}{\gamma^2+u_1^2}
  + \frac{\beta^* u_1^2}{\gamma^2+u_1^2}
= p(u_1) + \frac{\beta^* u_1^2}{\gamma^2+u_1^2},
\]
and, using \( q(u_1) = 1 \) at \( \beta = \beta_0 \), the determinant is
\[
\omega(\beta^*) = \det(\mathcal{J})
= 1 + \frac{2\beta^*\, u_1^2(1 - u_1)}{\gamma^2 + u_1^2}.
\]

Since \( \nu(0) = p(u_1) < 2 \) and \( \omega(0) = 1 \), the characteristic roots are complex conjugate with modulus close to one:
\begin{equation}\label{bif5}
\lambda_{1,2} = \frac{1}{2}\left[ \nu(\beta^*) \pm i \sqrt{4\omega(\beta^*) - \nu^2(\beta^*)} \right],
\end{equation}
with \(|\lambda_{1,2}| = \sqrt{\omega(\beta^*)}\).

Differentiating, we obtain:
\begin{equation}\label{bif7}
\frac{d|\lambda_{1,2}|}{d\beta^*}\Big|_{\beta^* = 0} = \frac{u_1^2(1 - u_1)}{\gamma^2 + u_1^2}> 0.
\end{equation}

Hence, the transversality condition
\[
\frac{d|\lambda_{1,2}|}{d\beta^*} \Big|_{\beta^* = 0} \neq 0
\]
is satisfied. Moreover, since \( 1 < \nu(0) < 2 \) and \(\omega(0) = 1 \), it follows that \( \lambda_{1,2}^m(0) \neq 1 \) for \( m = 1, 2, 3, 4 \), ensuring the non-resonance condition.

Therefore, all the classical conditions for the occurrence of a Neimark--Sacker bifurcation at the fixed point \( E_1 \) are satisfied.

\textbf{Step 3.} To derive the normal form of system~\eqref{bif2} at the critical parameter value \( \beta = \beta_0 \), we expand it into a third-order Taylor series around the fixed point \( (x, y) = (0, 0) \). This yields:
\begin{equation}\label{bif8}
\left\{
\begin{aligned}
x^{(1)} &= a_{10}x + a_{01}y + a_{20}x^2 + a_{11}xy + a_{02}y^2 + a_{30}x^3 + a_{21}x^2y + a_{12}xy^2 + a_{03}y^3 + \mathcal{O}(\rho^4), \\
y^{(1)} &= b_{10}x + b_{01}y + b_{20}x^2 + b_{11}xy + b_{02}y^2 + b_{30}x^3 + b_{21}x^2y + b_{12}xy^2 + b_{03}y^3 + \mathcal{O}(\rho^4),
\end{aligned}
\right.
\end{equation}
where \( \rho = \sqrt{x^2 + y^2} \), and the coefficients are given by:
\begin{align*}
&a_{10} = \frac{2u_1^2(1-u_1)}{\gamma^2+u_1^2},
&& a_{01} = -\frac{u_1^2}{\gamma^2+u_1^2}, \\
&a_{20} = \frac{\gamma^2u_1^2(3 - 5u_1) - u_1^5-\gamma^4}{u_1(\gamma^2 + u_1^2)^2},
&& a_{11} = -\frac{2\gamma^2u_1}{(\gamma^2 + u_1^2)^2}, \\
&a_{30} = \frac{4\gamma^2(1-u_1)(\gamma^2- u_1^2)}{(\gamma^2+ u_1^2)^3},
&& a_{21} = \frac{\gamma^2(3u_1^2-\gamma^2)}{(\gamma^2+ u_1^2)^3}, \\
&a_{02} = a_{03} = a_{12} = 0, \\[5pt]
&b_{10} = \frac{\gamma (1-u_1)(\gamma^2+u_1^2) }{u_1}\cdot\left( \frac{2\beta_0\gamma u_1 }{(\gamma^2+u_1^2)^2} - \frac{\theta}{(\gamma+u_1)^2} \right),
&& b_{01} = 1, \\
&b_{20} = \frac{\gamma (1-u_1)}{u_1}\cdot\left( \frac{\beta_0\gamma (\gamma^2-3u_1^2)}{(\gamma^2+u_1^2)^2} + \frac{\theta(\gamma^2+u_1^2)}{(\gamma+u_1)^3} \right), \\
&b_{11} = \gamma \left( \frac{2\beta_0\gamma u_1}{(\gamma^2 + u_1^2)^2} - \frac{\theta}{(\gamma + u_1)^2} \right), \\
&b_{30} = \frac{\gamma (1-u_1)(\gamma^2+u_1^2)}{u_1}\cdot\left( \frac{4\beta_0\gamma u_1 (u_1^2-\gamma^2)}{(\gamma^2+u_1^2)^4} - \frac{\theta}{(\gamma+u_1)^4} \right), \\
&b_{21} = \gamma \left( \frac{\beta_0\gamma(\gamma^2-3u_1^2)}{(\gamma^2 + u_1^2)^3} + \frac{\theta}{(\gamma + u_1)^3} \right), \\
&b_{02} = b_{03} = b_{12} = 0.
\end{align*}

The Jacobian matrix at the fixed point \( E_1 = (u_1, v_1) \) is
\[
J(E_1) =
\begin{bmatrix}
a_{10} & a_{01} \\
b_{10} & b_{01}
\end{bmatrix}.
\]
At the critical parameter value \( \beta = \beta_0 \), the condition \( q(u_1) = 1 \) holds, and the corresponding eigenvalues of \( J(E_1) \) are
\[
\lambda_{1,2} = \frac{1 + a_{10} \mp i\,\alpha}{2},
\quad \text{where} \quad
\alpha = \sqrt{3 - a_{10}^2 - 2a_{10}}.
\]
Since \( 1 < p(u_1) = 1 + a_{10} < 2 \), we have
\[
3 - a_{10}^2 - 2a_{10}
= 4 - \big(p(u_1)\big)^2 > 0,
\]
which ensures that the eigenvalues form a complex-conjugate pair lying on the unit circle, i.e., \( |\lambda_{1,2}| = 1 \).

The corresponding complex eigenvectors are:
\[
v_{1,2} =
\begin{bmatrix}
-\dfrac{u_1^2}{2(\gamma^2+u_1^2)} \\[6pt]
1
\end{bmatrix}
\mp i
\begin{bmatrix}
\dfrac{\alpha u_1^2}{2(\gamma^2 - u_1^2 + 2u_1^3)} \\[6pt]
0
\end{bmatrix}.
\]

\textbf{Step 4.} To compute the normal form of system~\eqref{bif2}, we rewrite system~\eqref{bif8} as:
\begin{equation}\label{sf}
\mathbf{x}^{(1)} = J \cdot \mathbf{x} + H(\mathbf{x}),
\end{equation}
where \( \mathbf{x} = (x, y)^{T} \), and \( H(\mathbf{x}) \) is the nonlinear part of system~\eqref{bif8} excluding the \( O(\cdot) \) terms, given by:
\[
H(\mathbf{x}) =
\begin{bmatrix}
a_{20}x^2 + a_{11}xy + a_{30}x^3 + a_{21}x^2y \\[6pt]
b_{20}x^2 + b_{11}xy + b_{30}x^3 + b_{21}x^2y
\end{bmatrix}.
\]

Define the transformation matrix:
\[
T =
\begin{bmatrix}
\dfrac{\alpha u_1^2}{2(\gamma^2 - u_1^2 + 2u_1^3)} & -\dfrac{u_1^2}{2(\gamma^2+u_1^2)} \\[8pt]
0 & 1
\end{bmatrix}
=
\begin{bmatrix}
mn & -n \\[4pt]
0 & 1
\end{bmatrix},
\]
where
\[
m = \frac{\gamma^2+u_1^2}{\gamma^2 - u_1^2 + 2u_1^3},
\quad
n = \frac{u_1^2}{2(\gamma^2+u_1^2)}.
\]
Then
\[
T^{-1} =
\begin{bmatrix}
\dfrac{1}{mn} & \dfrac{1}{m} \\[4pt]
0 & 1
\end{bmatrix}.
\]

Applying the transformation
\[
\begin{bmatrix}
x \\ y
\end{bmatrix}
= T \cdot
\begin{bmatrix}
X \\ Y
\end{bmatrix},
\]
system~\eqref{bif8} becomes:
\begin{equation}\label{sf1}
\mathbf{X}^{(1)} = T^{-1} J T \, \mathbf{X} + T^{-1} H(T\mathbf{X}) + O(\rho_1),
\end{equation}
where \( \mathbf{X} = (X, Y)^{T} \), and \( \rho_1 = \sqrt{X^2 + Y^2} \).

Let:
\[
H(T\mathbf{X}) = \begin{bmatrix} f(X, Y) \\ g(X, Y) \end{bmatrix},
\]
where
\begin{align*}
f(X, Y) &= a_{20}m^2n^2X^2 + (a_{11}mn - a_{20}mn^2)XY + (a_{20}n^2 - a_{11}n)Y^2 \\
&\quad + a_{30}m^3n^3X^3 + (a_{21}m^2n^2 - 3a_{30}m^2n^3)X^2Y \\
&\quad + (3a_{30}mn^3 - 2a_{21}mn^2)XY^2 + (a_{21}n^2 - a_{30}n^3)Y^3,
\end{align*}
\begin{align*}
g(X, Y) &= b_{20}m^2n^2X^2 + (b_{11}mn - b_{20}mn^2)XY + (b_{20}n^2 - b_{11}n)Y^2 \\
&\quad + b_{30}m^3n^3X^3 + (b_{21}m^2n^2 - 3b_{30}m^2n^3)X^2Y \\
&\quad + (3b_{30}mn^3 - 2b_{21}mn^2)XY^2 + (b_{21}n^2 - b_{30}n^3)Y^3.
\end{align*}

Now define:
\[
T^{-1} H(T\mathbf{X}) = \begin{bmatrix} F(X, Y) \\ G(X, Y) \end{bmatrix},
\]
where
\begin{align*}
F(X, Y) &= c_{20}X^2 + c_{11}XY + c_{02}Y^2 + c_{30}X^3 + c_{21}X^2Y + c_{12}XY^2 + c_{03}Y^3, \\
G(X, Y) &= d_{20}X^2 + d_{11}XY + d_{02}Y^2 + d_{30}X^3 + d_{21}X^2Y + d_{12}XY^2 + d_{03}Y^3,
\end{align*}
with coefficients:
\begin{align*}
&c_{20} = a_{20}mn + b_{20}mn^2, \quad
c_{11} = a_{11} - a_{20}n + b_{11}n - b_{20}n^2, \\
&c_{02} = \frac{a_{20}n - a_{11} + b_{20}n^2 - b_{11}n}{m}, \quad
c_{30} = a_{30}m^2n^2 + b_{30}m^2n^3, \\
&c_{21} = a_{21}mn - 3a_{30}mn^2 + b_{21}mn^2 - 3b_{30}mn^3, \\
&c_{12} = 3a_{30}n^2 - 2a_{21}n + 3b_{30}n^3 - 2b_{21}n^2, \\
&c_{03} = \frac{a_{21}n - a_{30}n^2 + b_{21}n^2 - b_{30}n^3}{m}, \\
&d_{20} = b_{20}m^2n^2, \quad
d_{11} = b_{11}mn - b_{20}mn^2, \\
&d_{02} = b_{20}n^2 - b_{11}n, \quad
d_{30} = b_{30}m^3n^3, \\
&d_{21} = b_{21}m^2n^2 - 3b_{30}m^2n^3, \\
&d_{12} = 3b_{30}mn^3 - 2b_{21}mn^2, \quad
d_{03} = b_{21}n^2 - b_{30}n^3.
\end{align*}

The relevant partial derivatives at the origin \( (0,0) \) are:
\begin{align*}
&F_{XX} = 2c_{20}, \quad F_{XY} = c_{11}, \quad F_{YY} = 2c_{02}, \\
&F_{XXX} = 6c_{30}, \quad F_{XXY} = 2c_{21}, \quad F_{XYY} = 2c_{12}, \quad F_{YYY} = 6c_{03}, \\
&G_{XX} = 2d_{20}, \quad G_{XY} = d_{11}, \quad G_{YY} = 2d_{02}, \\
&G_{XXX} = 6d_{30}, \quad G_{XXY} = 2d_{21}, \quad G_{XYY} = 2d_{12}, \quad G_{YYY} = 6d_{03}.
\end{align*}

\textbf{Step 5.} For the Neimark–Sacker bifurcation analysis, the discriminant quantity \( \mathcal{L} \) determining the stability of the emerging invariant closed curve is:
\begin{equation}\label{lya}
\mathcal{L} = -\operatorname{Re}\!\left[\frac{(1 - 2\lambda_1)\lambda_2^2}{1 - \lambda_1} L_{11} L_{20} \right] - \frac{1}{2} |L_{11}|^2 - |L_{02}|^2 + \operatorname{Re}(\lambda_2 L_{21}),
\end{equation}
where:
\begin{align*}
L_{20} &= \frac{1}{8} \left[(F_{XX} - F_{YY} + 2G_{XY}) + i(G_{XX} - G_{YY} - 2F_{XY}) \right], \\
L_{11} &= \frac{1}{4} \left[(F_{XX} + F_{YY}) + i(G_{XX} + G_{YY}) \right], \\
L_{02} &= \frac{1}{8} \left[(F_{XX} - F_{YY} - 2G_{XY}) + i(G_{XX} - G_{YY} + 2F_{XY}) \right], \\
L_{21} &= \frac{1}{16} \left[(F_{XXX} + F_{XYY} + G_{XXY} + G_{YYY}) \right. \\
&\quad \left. + i(G_{XXX} + G_{XYY} - F_{XXY} - F_{YYY}) \right].
\end{align*}

Based on this, we state the following result.

\begin{thm}\label{bifurcation}
Let \( \beta = \beta_0 \) satisfy \( q(u_1) = 1 \). If \( \beta \) varies in a sufficiently small neighborhood of \( \beta_0 \), the system~\eqref{h12} undergoes a Neimark-Sacker bifurcation at \( E_1 = (u_1, v_1) \). Moreover, if \( \mathcal{L} < 0 \) (resp. \( \mathcal{L} > 0 \)), an attracting (resp. repelling) invariant closed curve bifurcates from \( E_1 \) for \( \beta > \beta_0 \) (resp. \( \beta < \beta_0 \)).
\end{thm}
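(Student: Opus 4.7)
The plan is to invoke the classical Neimark--Sacker bifurcation theorem on the shifted and perturbed system~\eqref{bif2} with $\beta^{*}$ as the bifurcation parameter. The preparatory computations of Steps~1--5 already furnish every ingredient, so the proof proceeds by checking the three standard hypotheses---eigenvalue crossing, transversality, absence of strong resonances---and then reading off the topological type of the bifurcating closed curve from the sign of $\mathcal{L}$.

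First I would verify that at $\beta^{*}=0$ the linearization $\mathcal{J}(0,0)$ possesses a simple complex-conjugate pair on the unit circle. From~\eqref{bif5} one has $|\lambda_{1,2}(0)|=\sqrt{\omega(0)}=1$, and since $a_{10}=2u_1^{2}(1-u_1)/(\gamma^{2}+u_1^{2})\in(0,1)$ for any admissible $u_1\in(0,1)$, the trace $\nu(0)=p(u_1)=1+a_{10}$ lies in $(1,2)$; hence $4\omega(0)-\nu(0)^{2}>0$ and the eigenvalues are genuinely complex conjugate. The transversality condition is exactly~\eqref{bif7}, which is strictly positive because $u_1\in(0,1)$. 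For the strong-resonance condition, write $\lambda_1(0)=e^{i\varphi_0}$ with $\cos\varphi_0=p(u_1)/2\in(1/2,1)$, so $\varphi_0\in(0,\pi/3)$; then $m\varphi_0\in(0,4\pi/3)$ avoids $\{2\pi k:k\in\mathbb{Z}\}$ for $m=1,2,3,4$, ruling out any $1{:}m$ resonance.

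Next I would use the normal-form reduction already set up in Steps~3--4. The third-order Taylor expansion~\eqref{bif8} and the linear change of variables $T$ conjugate the map to
\[
\begin{pmatrix}X^{(1)}\\ Y^{(1)}\end{pmatrix}=\begin{pmatrix}\operatorname{Re}\lambda_1 & -\operatorname{Im}\lambda_1\\ \operatorname{Im}\lambda_1 & \operatorname{Re}\lambda_1\end{pmatrix}\begin{pmatrix}X\\ Y\end{pmatrix}+\begin{pmatrix}F(X,Y)\\ G(X,Y)\end{pmatrix}+O(\rho_1^{4}),
\]
where $F,G$ collect the cubic polynomial nonlinearities with coefficients $c_{ij},d_{ij}$ as listed in Step~4. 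A minor point of rigor is the invertibility of $T$, i.e.\ the non-vanishing of $\gamma^{2}-u_1^{2}+2u_1^{3}$; this factor equals $(\gamma^{2}+u_1^{2})(1-a_{10})>0$ since $a_{10}<1$, so $T$ is well-defined.

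Finally, inserting the partial derivatives of $F,G$ at the origin into the formulas for $L_{20},L_{11},L_{02},L_{21}$ yields the discriminant $\mathcal{L}$ of~\eqref{lya}, which is the first Lyapunov (resonant cubic) coefficient of the Neimark--Sacker normal form. The classical theorem then gives: if $\mathcal{L}<0$, the bifurcation is supercritical and a unique attracting closed invariant curve emerges from $E_1$ as $\beta$ crosses $\beta_0$ upward (the side on which $E_1$ destabilizes, by~\eqref{bif7}); if $\mathcal{L}>0$, the bifurcation is subcritical and a unique repelling closed invariant curve surrounds the still-stable $E_1$ for $\beta<\beta_0$. The main obstacle is purely algebraic rather than conceptual: the chain of substitutions from $(a_{ij},b_{ij})$ through $T^{-1}$ to $(c_{ij},d_{ij})$ and then to the $L_{*}$ produces an expression for $\mathcal{L}$ too unwieldy to sign in closed form, which is why the determination of the bifurcation type is relegated to the numerical experiments of Section~6.
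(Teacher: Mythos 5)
Your proposal is correct and follows essentially the same route as the paper: Steps 1--5 (eigenvalues on the unit circle from $q(u_1)=1$ with $p(u_1)<2$, transversality via~\eqref{bif7}, non-resonance, the linear conjugation by $T$, and the coefficient $\mathcal{L}$ of~\eqref{lya}) are exactly the paper's verification of the classical Neimark--Sacker hypotheses, with the sign of $\mathcal{L}$ then determining the super-/subcritical case. Your added remarks (invertibility of $T$ via $\gamma^{2}-u_1^{2}+2u_1^{3}=(\gamma^{2}+u_1^{2})(1-a_{10})>0$, and the explicit angle argument for excluding $1{:}m$ resonances) only make the same argument slightly more explicit.
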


\section{Global dynamics of (\ref{h12})}

It can be verified that the set
\[
\mathcal{U} = \{(u,v) \in \mathbb{R}_+^2 \mid 0 \leq u \leq 2,\ v = 0\}
\]
is invariant with respect to the operator~\eqref{h12}.

Moreover, if \( 0 < r \leq 1 \), then the set
\[
\mathcal{V} = \{(u,v) \in \mathbb{R}_+^2 \mid u = 0,\ v \geq 0\}
\]
is also invariant with respect to this operator.

In~\cite{SH-4}, it is shown that any trajectory starting from a point in \( \mathcal{U} \) converges to the fixed point \( (1,0) \),
while any trajectory originating in \( \mathcal{V} \) converges to the fixed point \( (0,0) \).

\begin{equation}\label{poly}
\psi(u)=(\beta-\theta+1-r)u^3+(\beta+1-r)\gamma u^2+(1-r-\theta)\gamma^2 u+(1-r)\gamma^3
\end{equation}

\begin{pro}\label{propv}
Let \( v^{(1)} \) be defined as in~\eqref{h12}.
If one of the following conditions on the parameters holds, then \( v^{(1)} \geq 0 \) for all \( u \in [0,1] \) and \( v \geq 0 \):

\begin{itemize}
    \item[(i)] \( 0 < \theta \leq 1 \)
    \begin{itemize}
        \item[(i.1)] \( 0 < r \leq 1 - \theta \), \( \beta > 0 \), \( \gamma > 0 \);
        \item[(i.2)] \( 1-\theta < r \leq \frac{4 - \theta}{4} \), \( 0<\beta<r+\theta-1 \), \( \gamma > \gamma_1 \);
        \item[(i.3)] \( 1 - \theta < r \leq 1 - \theta(3 - 2\sqrt{2}) \), \( \beta \geq r + \theta - 1 \), \( \gamma > 0 \);
        \item[(i.4)] \( \frac{4 - \theta}{4} < r < 1 \), \( 0 < \beta < \beta_2 \), \( \gamma \geq \gamma_1 \);
        \item[(i.5)] \( \frac{4 - \theta}{4} < r \leq 1 - \theta(3 - 2\sqrt{2}) \), \( \beta_2 \leq \beta < \beta_3 \), \( \gamma \geq \gamma_3 \);
        \item[(i.6)] \( \frac{4 - \theta}{4} < r < 1 - \theta(3 - 2\sqrt{2}) \), \( \beta_3 \leq \beta < r + \theta - 1 \), \( \gamma \geq \gamma_1 \);
        \item[(i.7)] \( 1 - \theta(3 - 2\sqrt{2}) < r < 1 \), \( \beta_2 \leq \beta < \beta_3 \), \( \gamma \geq \gamma_3 \);
        \item[(i.8)] \( 1 - \theta(3 - 2\sqrt{2}) < r < 1 \), \( \beta \geq \beta_3 \), \( \gamma > 0 \);
    \end{itemize}

    \item[(ii)] \( 1 < \theta \leq 4 \)
    \begin{itemize}
        \item[(ii.1)] \( 0 < r \leq \frac{4 - \theta}{4} \), \( 0 < \beta < r + \theta - 1 \), \( \gamma \geq \gamma_1 \);
        \item[(ii.2)] \( 0 < r \leq 1 - \theta(3 - 2\sqrt{2}) \), \( \beta \geq r + \theta - 1 \), \( \gamma > 0 \);
        \item[(ii.3)] \( \frac{4 - \theta}{4} < r < 1 \), \( 0 < \beta < \beta_2 \), \( \gamma \geq \gamma_1 \);
        \item[(ii.4)] \( \frac{4 - \theta}{4} < r \leq 1 - \theta(3 - 2\sqrt{2}) \), \( \beta_2 \leq \beta < \beta_3 \), \( \gamma \geq \gamma_3 \);
        \item[(ii.5)] \( \frac{4 - \theta}{4} < r < 1 - \theta(3 - 2\sqrt{2}) \), \( \beta_3 \leq \beta < r + \theta - 1 \), \( \gamma \geq \gamma_1 \);
        \item[(ii.6)] \( 1 - \theta(3 - 2\sqrt{2}) < r < 1 \), \( \beta_2 \leq \beta < \beta_3 \), \( \gamma \geq \gamma_3 \);
        \item[(ii.7)] \( 1 - \theta(3 - 2\sqrt{2}) < r < 1 \), \( \beta \geq \beta_3 \), \( \gamma > 0 \);
    \end{itemize}

    \item[(iii)] \( 4 < \theta < 3 + 2\sqrt{2} \)
    \begin{itemize}
        \item[(iii.1)] \( 0 < r < 1 \), \( 0 < \beta < \beta_2 \), \( \gamma \geq \gamma_1 \);
        \item[(iii.2)] \( 0 < r \leq 1 - \theta(3 - 2\sqrt{2}) \), \( \beta_2 \leq \beta < \beta_3 \), \( \gamma \geq \gamma_3 \);
        \item[(iii.3)] \( 0 < r < 1 - \theta(3 - 2\sqrt{2}) \), \( \beta_3 \leq \beta < r + \theta - 1 \), \( \gamma \geq \gamma_1 \);
        \item[(iii.4)] \( 0 < r \leq 1 - \theta(3 - 2\sqrt{2}) \), \( \beta \geq r + \theta - 1 \), \( \gamma > 0 \);
        \item[(iii.5)] \( 1 - \theta(3 - 2\sqrt{2}) < r < 1 \), \( \beta_2 \leq \beta < \beta_3 \), \( \gamma \geq \gamma_3 \);
        \item[(iii.6)] \( 1 - \theta(3 - 2\sqrt{2}) < r < 1 \), \( \beta \geq \beta_3 \), \( \gamma > 0 \);
    \end{itemize}

    \item[(iv)] \( \theta \geq 3 + 2\sqrt{2} \)
    \begin{itemize}
        \item[(iv.1)] \( 0 < r < 1 \), \( 0 < \beta < \beta_2 \), \( \gamma \geq \gamma_1 \);
        \item[(iv.2)] \( 0 < r < 1 \), \( \beta_2 \leq \beta < \beta_3 \), \( \gamma \geq \gamma_3 \);
        \item[(iv.3)] \( 0 < r < 1 \), \( \beta \geq \beta_3 \), \( \gamma > 0 \).
    \end{itemize}
\end{itemize}
where \( \gamma_1 \) and \( \gamma_3 \) ($\gamma_1<\gamma_3$) are the solutions of the following cubic equation:
\begin{equation}\label{gammaeq}
(1 - r)\gamma^3 + (1 - r - \theta)\gamma^2 + (\beta + 1 - r)\gamma + \beta + 1 - r - \theta = 0,
\end{equation}
and \( \beta_2 \) and \( \beta_3 \) ($\beta_2<\beta_3$) are the solutions of the cubic equation:
\begin{equation}\label{betaeq}
\begin{aligned}
&(4r - 4)\beta^3 + (\theta^2 - 20\theta + 20r\theta + 40r - 20r^2 - 20)\beta^2 \\
&\quad + (4\theta^3 + 8\theta^2 - 8r\theta^2 + 8\theta - 16r\theta + 8r^2\theta + 32r^3 - 96r^2 + 96r - 32)\beta \\
&\quad - 4\theta^4 + 16\theta^3 - 16r\theta^3 - 32\theta^2 + 64r\theta^2 - 32r^2\theta^2 + 32\theta - 96r\theta \\
&\quad + 96r^2\theta - 32r^3\theta - 16r^4 + 64r^3 - 96r^2 + 64r - 16 = 0.
\end{aligned}
\end{equation}

\end{pro}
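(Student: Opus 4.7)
The plan is to reduce the inequality $v^{(1)} \geq 0$ to the positivity of a single cubic on an interval and then analyse that cubic by a case split on its coefficients. First I would factor
$$v^{(1)} = v \cdot \Phi(u), \qquad \Phi(u) := \frac{\beta u^2}{\gamma^2 + u^2} - \frac{\theta u}{\gamma + u} + (1 - r).$$
Since $v \geq 0$, the condition $v^{(1)} \geq 0$ for every admissible $(u,v)$ is equivalent to $\Phi(u) \geq 0$ on $[0,1]$. Multiplying $\Phi(u)$ by the strictly positive denominator $(\gamma + u)(\gamma^2 + u^2)$ and expanding yields exactly the cubic $\psi(u)$ of \eqref{poly}, so the proposition reduces to characterising when $\psi(u) \geq 0$ for every $u \in [0,1]$. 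A convenient preliminary step is the substitution $u = \gamma w$, under which $\psi(\gamma w) = \gamma^3 \chi(w)$ with
$$\chi(w) := (\beta - \theta + 1 - r) w^3 + (\beta + 1 - r) w^2 + (1 - r - \theta) w + (1 - r),$$
so $\chi$ has $\gamma$-independent coefficients and the inequality becomes $\chi(w) \geq 0$ on $[0, 1/\gamma]$.

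Since the minimum of a cubic on a closed interval is attained at an endpoint or an interior critical point, I would split the condition into (a) $\chi(0) \geq 0$, (b) $\chi(1/\gamma) \geq 0$, and (c) $\chi(w^{\ast}) \geq 0$ at every interior local minimum $w^{\ast} \in (0, 1/\gamma)$. Condition (a) is $1 - r \geq 0$, i.e.\ $r \leq 1$, which holds in every listed subcase. Condition (b) is equivalent to $\psi(1) \geq 0$, whose boundary $\psi(1) = 0$ is identically the cubic \eqref{gammaeq} in $\gamma$ with positive leading coefficient $1 - r$; its positive real roots $\gamma_1 \leq \gamma_3$ (when they exist) together with the sign of its constant term $\beta + 1 - r - \theta$ furnish the thresholds $\beta = r + \theta - 1$ and $\gamma = \gamma_1, \gamma_3$ recurring throughout the statement. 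Writing $A = \beta - \theta + 1 - r$, $B = \beta + 1 - r$, $C = 1 - r - \theta$, $D = 1 - r$, so that $\chi(w) = A w^3 + B w^2 + C w + D$ and $\chi'(w) = 3 A w^2 + 2 B w + C$, I would locate the positive critical points of $\chi$ via the discriminant $B^2 - 3 A C$ together with the signs of $\chi'(0) = C$ and of the leading coefficient $A$.

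The crucial point is that $\chi(w^{\ast}) = 0$ at an interior critical point is equivalent to $\chi$ possessing a repeated real root, i.e.\ $\mathrm{disc}(\chi) = 0$. A direct computation of $\mathrm{disc}(\chi)$ yields exactly the cubic in $\beta$ of \eqref{betaeq}, whose real roots are $\beta_2 \leq \beta_3$; the sign of $\mathrm{disc}(\chi)$ on either side of these values decides whether $\chi$ has three real roots or only one, and hence whether an interior local minimum of $\chi$ can sit inside $(0, 1/\gamma)$. The remaining threshold constants in the statement emerge as degeneracies of the same analysis: $\tfrac{4-\theta}{4}$ and $1 - \theta(3 - 2\sqrt{2})$ arise when the critical points of $\chi$ coincide with the endpoints of $[0, 1/\gamma]$ or with the roots of $\chi$ itself (so that various inequalities become equalities), and the $\theta$-breakpoints $1$, $4$, and $3 + 2\sqrt{2}$ organising the blocks (i)--(iv) are precisely the $\theta$-values at which these two secondary thresholds collide with the positivity constraints $r > 0$ and $1 - r > 0$.

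With these thresholds in hand, each subcase (i.1)--(iv.3) corresponds to a specific sign pattern of $A$, $B$, $C$, $D$, $\mathrm{disc}(\chi)$, $\mathrm{disc}(\chi')$, and $\chi(1/\gamma)$, together with a location of the positive critical points of $\chi$ relative to $[0, 1/\gamma]$. In each configuration the graph of $\chi$ on $[0, 1/\gamma]$ is either monotone, has a single interior extremum, or has both a local maximum and a local minimum, and I would verify directly that $\min_{[0, 1/\gamma]} \chi \geq 0$ follows from the listed parameter inequalities. I expect the main obstacle to be combinatorial rather than conceptual: ensuring that the enumeration exactly tiles the positivity region of $\chi$ on $[0, 1/\gamma]$, with no subregion omitted or double-counted. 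The resultants already encoded by \eqref{gammaeq} and \eqref{betaeq} should streamline this bookkeeping, and the cleanest finish is probably a Sturm-style sign verification along each boundary surface of the parameter region.
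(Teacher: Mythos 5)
Your proposal follows essentially the same route as the paper's proof: reduce $v^{(1)}\geq 0$ to nonnegativity of the cubic $\psi$ in \eqref{poly} (the paper's auxiliary $\mathcal{K}$) on the relevant interval, control it through the endpoint value at $u=1$ — whose vanishing is exactly \eqref{gammaeq} — and through the value at the interior local minimizer, whose vanishing (your repeated-root/discriminant condition, the paper's $\mathcal{K}(\widetilde{u}_2)=0$) produces \eqref{betaeq}, and then enumerate the parameter cases. Your rescaling $u=\gamma w$ and your use of $\operatorname{disc}(\chi)$ instead of evaluating the cubic at its critical point are only cosmetic variants, and, like the paper, you leave the final case-by-case bookkeeping as a tedious but routine verification.
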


\begin{proof}
Consider the condition \( v^{(1)} \geq 0 \), which is equivalent to
\[
\frac{v\left((\beta+1-r-\theta)u^3+(\beta+1-r)\gamma u^2+(1-r-\theta)\gamma^2u+(1-r)\gamma^3\right)}{(\gamma^2+u^2)(\gamma+u)} \geq 0.
\]

Introduce the auxiliary function:
\[
\mathcal{K}(u) = (\beta + 1 - r - \theta)u^3 + (\beta + 1 - r)\gamma u^2 + (1 - r - \theta)\gamma^2 u + (1 - r)\gamma^3,
\]
so that \( \mathcal{K}(0) = (1 - r)\gamma^3 \), and the value at \( u = 1 \), which coincides with the left-hand side of the cubic equation~\eqref{gammaeq}, is:
\[
\mathcal{K}(1) = (1 - r)\gamma^3 + (1 - r - \theta)\gamma^2 + (\beta + 1 - r)\gamma + \beta + 1 - r - \theta.
\]

Under the parameter conditions of case (i.1), it is evident that \( \mathcal{K}(u) \geq 0 \) for all \( u \in (0,1) \).

Since \( \mathcal{K}(u) \) is a cubic function, it can have at most two local extrema, occurring at points \( \widetilde{u}_1 \) and \( \widetilde{u}_2 \), where:
\[
\widetilde{u}_{1} = \frac{\gamma\left[\beta + 1 - r - \sqrt{
(\beta + 1 - r)^2 - 3(1 - r - \theta)(\beta + 1 - r - \theta)}\right]}{3(\beta + 1 - r - \theta)},
\]
\[
\widetilde{u}_{2} = \frac{\gamma\left[\beta + 1 - r + \sqrt{
(\beta + 1 - r)^2 - 3(1 - r - \theta)(\beta + 1 - r - \theta)}\right]}{3(\beta + 1 - r - \theta)}.
\]

In order for \( \mathcal{K}(u) \geq 0 \) to hold for all \( u \in (0,1) \), the condition \( 0 < r < 1 \) is necessary. Now consider the sign of the leading coefficient of \( \mathcal{K}(u) \), which is \( \beta + 1 - r - \theta \):

\begin{itemize}
    \item If \( \beta + 1 - r - \theta < 0 \), then it suffices to verify the inequality \( \mathcal{K}(1) \geq 0 \).
    \item If \( \beta + 1 - r - \theta \geq 0 \), then the following two scenarios must be considered:
    \begin{enumerate}
        \item \( \mathcal{K}_{\min} = \mathcal{K}(\widetilde{u}_2) \geq 0 \);
        \item \( \mathcal{K}(\widetilde{u}_2) < 0 \), \( \widetilde{u}_2 \geq 1 \), and \( \mathcal{K}(1) \geq 0 \).
    \end{enumerate}
\end{itemize}

Evaluating \( \mathcal{K}(\widetilde{u}_2) \) leads to the cubic equation~\eqref{betaeq}. By analyzing all possible parameter cases and generalizing the results, the proof of the proposition is complete.
\end{proof}

Define the function
\begin{equation}\label{omega}
\omega(u) = \frac{(2 - u)(\gamma^2 + u^2)}{u},
\end{equation}

\begin{pro}\label{lim1}
Assume that the parameters satisfy one of the conditions given in Proposition~\ref{propv}, and that $\beta \leq \Psi(1)$ and $\gamma \geq \sqrt{2} - 1$. Then, for any initial point $(u,v) \in M$ with $u > 0$ and $v \neq \omega(u)$, the iterates of the operator \eqref{h12} converge to the fixed point $(1,0)$, where
\[
M = \left\{ (u,v) \in \mathbb{R}^2 \,\middle|\, 0 \leq u \leq 1,\; 0 \leq v \leq \omega(u) \right\}.
\]
\end{pro}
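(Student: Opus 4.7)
The plan is to show that $v_n$ decreases monotonically to $0$ and then obtain $u_n \to 1$ by treating the $u$-dynamics as a small perturbation of the quadratic map $g(u) = u(2-u)$. Under $\gamma \geq \sqrt{2} - 1$, the parameter triple $(r, \gamma, \theta)$ cannot lie in the region $R_5$ of Theorem~\ref{thm1}, so the proof of Theorem~\ref{thm1} yields that $\Psi$ is strictly decreasing on $(0, 1]$; with $\beta \leq \Psi(1)$ this gives $\Psi(u) \geq \beta$ for every $u \in (0, 1]$, and hence $A(u) := 1 + \beta u^2/(\gamma^2+u^2) - r - \theta u/(\gamma+u) \leq 1$, so $v^{(1)} = v\, A(u) \leq v$ (and $v^{(1)} \geq 0$ by Proposition~\ref{propv}). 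Moreover, $\omega'(u) = -2(u^3 - u^2 + \gamma^2)/u^2 < 0$ on $(0, 1]$ since $(\sqrt{2}-1)^2 = 3 - 2\sqrt{2} > 4/27 \geq \max_{u \in [0,1]} u^2(1-u)$, so $\omega$ is strictly decreasing on $(0, 1]$.

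Next I would verify that $M$ is forward invariant and that $v_0 < \omega(u_0)$ propagates strictly to every iterate. The bounds $u^{(1)} \in (0, 1]$ are easy (from $v < \omega(u)$ and $u(2-u) \leq 1$), so the crux is $v^{(1)} \leq \omega(u^{(1)})$. Split by the fixed-point nullcline $v^*(u) = (1-u)(\gamma^2+u^2)/u$. If $v \geq v^*(u)$ then $u^{(1)} \leq u$, so by monotonicity of $\omega$, $\omega(u^{(1)}) \geq \omega(u) > v \geq v^{(1)}$. If $v < v^*(u)$, parameterize $v = t v^*(u)$ with $t \in [0, 1)$ and set $s = 1-u$, $\sigma = 1-u^{(1)}$; a direct computation yields $\sigma = s[t + s(1-t)] \leq s$. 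Using $A(u) \leq 1$, the inequality $u^{(1)} v^{(1)} \leq (2 - u^{(1)})(\gamma^2 + (u^{(1)})^2)$ reduces to
\[
t s (1 - \sigma)\bigl(\gamma^2 + (1-s)^2\bigr) \leq (1 - s)(1 + \sigma)\bigl(\gamma^2 + (1-\sigma)^2\bigr),
\]
and since $\sigma \leq s$ gives $\gamma^2 + (1-\sigma)^2 \geq \gamma^2 + (1-s)^2$, it suffices to show $t s (1 - \sigma) \leq (1 - s)(1 + \sigma)$; routine expansion produces the identity $(1 - s)(1 + \sigma) - t s (1 - \sigma) = (1 - s)\bigl[1 + s^2(1-t)^2\bigr] \geq 0$. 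This algebraic identity is the main technical obstacle.

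With the above, $v_n$ is non-increasing and bounded below by $0$, so $v_n \to v_* \geq 0$. To show $v_* = 0$, I would use the $\omega$-limit set $\Omega$ of the bounded orbit $(u_n, v_n)$: it is compact, $F$-invariant ($F(\Omega) \subseteq \Omega$), and contained in $\{v = v_*\}$. For any $(u, v_*) \in \Omega$, invariance forces $v_* A(u) = v_*$; if $v_* > 0$ this gives $A(u) = 1$, i.e.\ $\Psi(u) = \beta$, which under $\beta \leq \Psi(1)$ and $\Psi$ strictly decreasing has no solution in $(0, 1)$ and only $u = 1$ when $\beta = \Psi(1)$. In the latter case $\Omega \subseteq \{(1, v_*)\}$, contradicting $F(1, v_*) = (1 - v_*/(1+\gamma^2), v_*) \neq (1, v_*)$ for $v_* > 0$; otherwise $\Omega$ would be empty. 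Hence $v_* = 0$.

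Finally, set $y_n = 1 - u_n \in [0, 1)$, so that $y_{n+1} = y_n^2 + \delta_n$ with $\delta_n := u_n^2 v_n/(\gamma^2 + u_n^2) \in [0, v_n]$. Since $\delta_n \to 0$, $L := \limsup_n y_n \leq L^2$, forcing $L \in \{0, 1\}$. To rule out $L = 1$ (i.e.\ $\liminf_n u_n = 0$), I would use the saddle structure of $(0,0)$: once $v_n$ is small enough, the estimate $u^{(1)} \geq u(2 - u - u v/\gamma^2)$ gives $u^{(1)} \geq (3/2) u$ whenever $u \leq 1/4$, so $u$ escapes the neighborhood $\{u \leq 1/4\}$ in finitely many steps; while for $u \geq 1/4$ and $v \leq 3/16$ one has $u^{(1)} \geq u(2-u) - v \geq 1/4$. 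Therefore $u_n \geq 1/4$ eventually, $\liminf_n u_n > 0$, and thus $L = 0$, so $u_n \to 1$.
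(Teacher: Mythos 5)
Your proposal is correct, and it shares the paper's skeleton in the first half: monotonicity of $v^{(n)}$ from $\beta\le\Psi(u)$ (via $\Psi$ strictly decreasing when $\gamma\ge\sqrt2-1$, so no positive fixed point), strict monotonicity of $\omega$, and forward invariance of $M$ split by the nullcline $v^*(u)=(1-u)(\gamma^2+u^2)/u$. Where you genuinely diverge is in the two places the paper is thinnest, and in both your version is the more complete one. First, in the sub-nullcline case the paper asserts $v^{(1)}<\omega(u^{(1)})$ ``holds trivially'' from $v^{(1)}\le v$ and $u^{(1)}\ge u$; but since $\omega$ is decreasing, $u^{(1)}\ge u$ gives $\omega(u^{(1)})\le\omega(u)$, i.e.\ the bound goes the wrong way, so something quantitative is needed --- your parameterization $v=tv^*(u)$, the formula $\sigma=s[t+s(1-t)]$, and the identity $(1-s)(1+\sigma)-ts(1-\sigma)=(1-s)[1+s^2(1-t)^2]$ all check out and supply exactly the missing estimate. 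Second, for the endgame the paper invokes LaSalle with $V=v$ on a compact slab $N$ and concludes convergence to $(1,0)$; strictly speaking the largest invariant subset of $\{v=0\}$ is the whole segment, so convergence to the single point $(1,0)$ (in particular ruling out accumulation at $u=0$) still requires an argument the paper does not spell out. You replace this with an $\omega$-limit-set argument forcing $v_*=0$ (using $\Psi(u)>\beta$ on $(0,1)$, $A(0)=1-r<1$, and the separate treatment of $\beta=\Psi(1)$) followed by the explicit perturbation analysis $y_{n+1}=y_n^2+\delta_n$ with the escape estimate $u^{(1)}\ge\tfrac32 u$ for $u\le\tfrac14$ and small $v$; the constants work since $\gamma^2\ge 3-2\sqrt2>4/27$. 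The trade-off: the paper's LaSalle route is shorter and matches its other global-dynamics propositions, while your route is elementary, self-contained, and closes the two gaps noted above.
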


\begin{proof}
For all $1 < u \leq 2$, we have
\[
u^{(1)} = u(2 - u) - \frac{u^2v}{\gamma^2 + u^2} \leq u(2 - u) \leq 1,
\]
so it suffices to consider the case $0 < u \leq 1$. Let $(u,v) \in M$. Since $v < \omega(u)$, it follows that $0 < u^{(1)} \leq 1$.

Consider the derivative of the function (\ref{omega}):
\[
\omega'(u) = \frac{2(-u^3 + u^2 - \gamma^2)}{u^2}.
\]
The function
\begin{equation}\label{ro}
\rho(u)=-u^3 + u^2 - \gamma^2
\end{equation}
attains its maximum value $\frac{4 - 27\gamma^2}{27}$ at $u = 2/3$. Thus, $\omega(u)$ is decreasing for $\gamma \geq \frac{2}{3\sqrt{3}}$. Under the assumption $\gamma \geq \sqrt{2} - 1 > \frac{2}{3\sqrt{3}}$, the function $\omega(u)$ is strictly decreasing.

From Proposition~\ref{propv}, we know that $v^{(1)} \geq 0$. Moreover, Theorem~\ref{thm1} implies that there is no positive fixed point when $\beta \leq \Psi(1)$ and $\gamma \geq \sqrt{2} - 1$. Therefore, $\beta \leq \Psi(u)$ for all $u \in [0,1]$, which implies that the sequence $\{v^{(n)}\}$ is decreasing.

We now show that $v^{(1)} < \omega(u^{(1)})$. Note that $u^{(1)} \leq u$ when
\[
v \geq \frac{(1 - u)(\gamma^2 + u^2)}{u},
\]
and $u^{(1)} \geq u$ when
\[
v \leq \frac{(1 - u)(\gamma^2 + u^2)}{u}.
\]
In the case $v \leq \frac{(1 - u)(\gamma^2 + u^2)}{u}$, since $\{v^{(n)}\}$ is decreasing and $u^{(1)} \geq u$, the inequality $v^{(1)} < \omega(u^{(1)})$ holds trivially.

Now assume $v \geq \frac{(1 - u)(\gamma^2 + u^2)}{u}$, i.e., $u^{(1)} \leq u$. Since $\omega(u)$ is decreasing, and $\{v^{(n)}\}$ is decreasing, we have
\[
v^{(1)} \leq v < \omega(u) < \omega(u^{(1)}),
\]
which again confirms $v^{(1)} < \omega(u^{(1)})$. Therefore, the set $M$ is invariant under the operator~\eqref{h12}.

Now fix a horizontal line $v = \nu > 0$ and define the compact, positively invariant subset
\[
N = \left\{ (u,v) \in M \,\middle|\, 0 \leq v \leq \nu \right\} \subset M.
\]
Clearly, for any $(u,v) \in M$, we can choose $\nu$ such that $(u,v) \in N$.

Define the Lyapunov function \( V : N \to \mathbb{R} \) by
\[
V(u,v) = v.
\]
It satisfies \( V(u,v) \geq 0 \) for all \( (u,v) \in N \) and \( V(1,0) = 0 \). The difference along the trajectory is
\[
\Delta V = V(u^{(1)}, v^{(1)}) - V(u,v) = v^{(1)} - v \leq 0,
\]
since we have already shown that $v^{(1)} \leq v$. Thus, \( V \) is non-increasing along trajectories.

The set where \( \Delta V = 0 \) is
\[
\mathcal{B} = \left\{ (u,v) \in N \;\middle|\; \Delta V = 0 \right\} = \left\{ (u, 0) \right\}.
\]
For \( u > 0 \), the only invariant point in \( \mathcal{B} \) is the fixed point \( (1, 0) \).

Since \( N \) is compact and positively invariant, and \( V(u,v) \) is continuous and non-increasing along trajectories, LaSalle's Invariance Principle implies that all trajectories starting with \( 0 < u \leq 1 \) and \( 0 < v < \omega(u) \) converge to the largest invariant subset of \( \mathcal{B} \), namely the singleton \( \{(1,0)\} \).
This completes the proof.
\end{proof}

\begin{pro}\label{lim2}
Assume that the parameters satisfy one of the conditions given in Proposition~\ref{propv}, and that
\[
\frac{2}{3\sqrt{3}} \leq \gamma < \sqrt{2} - 1.
\]
If either of the following conditions holds:
\begin{enumerate}
    \item[(i)] $0 < \theta \leq \dfrac{2r\gamma(1+\gamma)^2}{1 - 2\gamma - \gamma^2}$ and $\beta \leq \Psi(1)$;
    \item[(ii)] $\theta > \dfrac{2r\gamma(1+\gamma)^2}{1 - 2\gamma - \gamma^2}$ and $\beta < \Psi(\widehat{u})$,
\end{enumerate}
where $\widehat{u}$ is the unique local minimizer of $\Psi(u)$ in the interval $(0,1)$, then for any initial point $(u,v) \in M$ with $u > 0$ and $v \neq \omega(u)$, the iterates of the operator~\eqref{h12} converge to the fixed point $(1,0)$.
\end{pro}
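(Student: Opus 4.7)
The plan is to extend the proof of Proposition \ref{lim1} to the narrower $\gamma$-band $\tfrac{2}{3\sqrt 3}\le\gamma<\sqrt 2-1$. The two structural facts used there were (a) the strict monotonicity of $\omega(u)=(2-u)(\gamma^{2}+u^{2})/u$ on $(0,1)$, and (b) the absence of any positive fixed point of \eqref{h12}, equivalently $\beta\le\Psi(u)$ on $(0,1]$. Both facts remain available here, once the parameter hypotheses are translated through Theorem \ref{thm1}, and the rest of the argument (invariance of $M$, monotonicity of $\{v^{(n)}\}$, LaSalle's principle with $V(u,v)=v$) can then be copied almost verbatim from the previous proof.

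First, I would note that $\gamma\ge\tfrac{2}{3\sqrt 3}$ is still in force, so the analysis of $\rho(u)$ in \eqref{ro} gives $\omega'(u)<0$ on $(0,1)$ exactly as before. For the fixed-point count, recall from the proof of Theorem \ref{thm1} that when $0<\gamma<\sqrt 2-1$ the threshold $\theta^{*}:=\tfrac{2r\gamma(1+\gamma)^{2}}{1-2\gamma-\gamma^{2}}$ is precisely the boundary between the regime in which $\Psi$ is strictly decreasing on $(0,1)$ (namely $\theta\le\theta^{*}$, covered by Cases~1 and~2 in the proof of Theorem \ref{thm1}) and the regime $R_{5}$ in which $\Psi$ has a unique interior minimum at $\widehat u\in(0,1)$ (namely $\theta>\theta^{*}$). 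Consequently, in case (i) the assumption $\beta\le\Psi(1)$ yields $\beta\le\Psi(u)$ throughout $(0,1]$, while in case (ii) the assumption $\beta<\Psi(\widehat u)$ yields $\beta<\Psi(u)$ on $(0,1)$.

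Using the identity $v^{(1)}=v\bigl[\,1-\tfrac{u^{2}(\Psi(u)-\beta)}{\gamma^{2}+u^{2}}\bigr]$, which follows directly from the definition \eqref{beta} of $\Psi$, either hypothesis gives $v^{(1)}\le v$ for every $(u,v)\in M$ with $u\in(0,1]$. Non-negativity $v^{(1)}\ge 0$ is supplied by Proposition \ref{propv}, and the invariance relation $v^{(1)}<\omega(u^{(1)})$ follows from the monotone decrease of $\{v^{(n)}\}$ together with the strict monotonicity of $\omega$, splitting into the subcases $u^{(1)}\ge u$ and $u^{(1)}\le u$ exactly as in the proof of Proposition \ref{lim1}. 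Choosing a compact slab $N=\{(u,v)\in M:\,0\le v\le\nu\}$ containing the initial point and the Lyapunov function $V(u,v)=v$, LaSalle's invariance principle applied to $\Delta V=v^{(1)}-v\le 0$ on $N$ forces every trajectory to accumulate on the largest invariant subset of $\{\Delta V=0\}=\{(u,0):0\le u\le 1\}$; this subset is the singleton $\{(1,0)\}$ once the initial data have $u>0$.

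The main obstacle is the bookkeeping required to translate the two alternative parameter hypotheses into the single statement that no positive fixed point exists in $(0,1)$, and hence that $\Psi(u)\ge\beta$ throughout $(0,1]$. Concretely, one must verify that the threshold $\theta^{*}$ appearing in the statement coincides exactly with the boundary of the region $R_{5}$ identified in Theorem \ref{thm1}, and that the $\gamma$-restriction $\tfrac{2}{3\sqrt 3}\le\gamma<\sqrt 2-1$ is compatible with each of the subcases $R_{1},R_{3},R_{5}$ that can occur in this band. Once this case analysis is carried out, no further dynamical argument is required beyond the Lyapunov/LaSalle mechanism already developed for Proposition \ref{lim1}.
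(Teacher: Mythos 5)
Your proposal is correct and follows essentially the same route as the paper: under either hypothesis Theorem~\ref{thm1} rules out positive fixed points (so $\beta\le\Psi(u)$ on $(0,1]$ and $\{v^{(n)}\}$ is nonincreasing), the bound $\gamma\ge\tfrac{2}{3\sqrt3}$ keeps $\omega$ strictly decreasing, and the invariance of $M$ plus the Lyapunov function $V(u,v)=v$ with LaSalle's principle are carried over verbatim from Proposition~\ref{lim1}. Your added details (the identity $v^{(1)}=v\bigl[1-\tfrac{u^{2}(\Psi(u)-\beta)}{\gamma^{2}+u^{2}}\bigr]$ and the identification of $\theta^{*}$ with the boundary of $R_{5}$) are accurate and merely make explicit what the paper leaves implicit.
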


\begin{proof}
Under the given conditions on the parameters, there exists no positive fixed point (by Theorem~\ref{thm1}), and the function $\omega(u)$ defined in (\ref{omega}), is strictly decreasing on the interval $(0,1]$. The remainder of the proof follows the same arguments as in the proof of Proposition~\ref{lim1}, relying on the invariance of the set $M$, the monotonicity of the sequence $\{v^{(n)}\}$, and the use of a Lyapunov function to apply LaSalle's Invariance Principle. Hence, all trajectories with $u > 0$ and
$v \neq \omega(u)$ converge to the fixed point $(1,0)$.
\end{proof}

Recall that the function $\rho(u)$, defined in~\eqref{ro}, is a cubic polynomial that takes the value $-\gamma^2$ at both endpoints $u = 0$ and $u = 1$. Moreover, for $0 < \gamma < \frac{2}{3\sqrt{3}}$, the function $\rho(u)$ attains a positive maximum, i.e., $\rho_{\max} > 0$, which implies that the function $\omega(u)$ has two critical points $\overline{x}_1$ and $\overline{x}_2$ in the interval $(0,1)$, with $\overline{x}_1 < \overline{x}_2$.

Consequently, $\omega(u)$ is decreasing on the intervals $(0, \overline{x}_1) \cup (\overline{x}_2, 1)$ and increasing on $(\overline{x}_1, \overline{x}_2)$. Therefore, $\omega(\overline{x}_1)$ is the global minimum and $\omega(\overline{x}_2)$ is the global maximum of $\omega(u)$ on $(0,1)$.

We now arrive at the following proposition.

\begin{pro}\label{lim3}
Assume that the parameters satisfy one of the conditions given in Proposition~\ref{propv}, and that
\[
0 < \gamma < \frac{2}{3\sqrt{3}}.
\]
Let the initial point $(u,v)$ belong to the set $M$, and suppose one of the following conditions holds:
\begin{enumerate}
    \item[(i)] $0 < u \leq \overline{x}_1$ and $v \neq \omega(u)$;
    \item[(ii)] $u > 0$ and $v < \omega(\overline{x}_1)$.
\end{enumerate}
Then the iterates of the operator~\eqref{h12} converge to the fixed point $(1,0)$.
\end{pro}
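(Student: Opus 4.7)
The plan is to adapt the LaSalle-based argument used in Propositions~\ref{lim1} and~\ref{lim2}, with the same Lyapunov function $V(u,v) = v$ on a compact positively invariant subset of $M$. The new difficulty is that $\omega$ is no longer monotone on $(0,1)$, so keeping the trajectory inside $M$ cannot rely on global monotonicity of $\omega$ and must be arranged separately in the two cases.

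Case (ii) is the cleaner one and I would treat it first. Because $\omega(\overline{x}_1)$ is the global minimum of $\omega$ on $(0,1]$, the hypothesis $v < \omega(\overline{x}_1)$ yields $v < \omega(u)$ for every $u \in (0,1]$, so $u^{(1)} > 0$. Proposition~\ref{propv} ensures $v^{(1)} \geq 0$, and the absence of positive fixed points under the stated parameter restrictions forces $\beta \leq \Psi(u)$ on $(0,1]$, which in turn gives $v^{(n+1)} \leq v^{(n)}$ along any trajectory. Consequently $v^{(1)} \leq v < \omega(\overline{x}_1) \leq \omega(u^{(1)})$, and every horizontal strip $N_\nu = \{(u,v) \in M : v \leq \nu\}$ with $v \leq \nu < \omega(\overline{x}_1)$ is compact and positively invariant.

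For case (i), the initial point lies in the region $0 < u \leq \overline{x}_1$, where $\omega$ is strictly decreasing. Provided $u^{(n)}$ stays in this strip, the argument of Proposition~\ref{lim1} applies verbatim: splitting according to the sign of $v - (1-u)(\gamma^2+u^2)/u$ (which governs whether $u^{(1)} \geq u$ or $u^{(1)} \leq u$), both sub-cases give $v^{(1)} < \omega(u^{(1)})$ by monotonicity of $\{v^{(n)}\}$ and of $\omega$ on $(0, \overline{x}_1]$. The scenario that is not automatic is that $u^{(1)}$ crosses $\overline{x}_1$; here one must argue that $v^{(1)}$ has already dropped below $\omega(\overline{x}_1)$, thereby reducing the situation to case (ii) and allowing the preceding step to conclude.

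Once a compact positively invariant neighborhood $N$ of the initial point has been identified, the final step is routine. On $N$, $\Delta V = v^{(1)} - v \leq 0$, and the zero set is $\mathcal{B} = N \cap \{v = 0\}$; the dynamics restricted to $\mathcal{B}$ reduce to $u \mapsto u(2-u)$, whose only attracting fixed point for $u > 0$ is $u = 1$. LaSalle's invariance principle then yields convergence to $(1,0)$. The main obstacle is the invariance verification in case (i): because $\omega$ is only locally decreasing, a quantitative argument based on the explicit formula $u^{(1)} = u(2-u) - u^2 v/(\gamma^2+u^2)$ is needed to rule out the possibility that $u^{(1)}$ jumps past $\overline{x}_1$ while $v^{(1)}$ still exceeds $\omega(\overline{x}_1)$.
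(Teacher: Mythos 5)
Your overall scheme (Lyapunov function \(V(u,v)=v\) on a compact positively invariant strip, monotone decrease of \(v\) from \(\beta\le\Psi(u)\), LaSalle) is exactly what the paper intends here — it offers no separate proof of Proposition~\ref{lim3} and relies on the arguments of Propositions~\ref{lim1}--\ref{lim2} plus the shape of \(\omega\). The problem is that your case~(i) stops precisely where the new mathematical content of this proposition lies. You acknowledge that \(u^{(1)}\) may jump past \(\overline{x}_1\) while \(v^{(1)}\) has not yet dropped below \(\omega(\overline{x}_1)\), and you say only that "a quantitative argument \dots is needed" — you never give it. This is not a routine detail: once \(u\) leaves \((0,\overline{x}_1]\), the monotonicity of \(\omega\) that drives the invariance argument of Proposition~\ref{lim1} is unavailable, and if the orbit ever reaches a state with \(v^{(n)}>\omega(u^{(n)})\) then \(u^{(n+1)}<0\) and the whole scheme collapses. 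What must be proved is an implication of the form: if \(0<u\le\overline{x}_1\), \(v<\omega(u)\) and \(u^{(1)}>\overline{x}_1\), then \(v<\omega(\overline{x}_1)\). From the explicit map, \(u^{(1)}>\overline{x}_1\) forces \(v<\dfrac{(\gamma^2+u^2)\bigl(u(2-u)-\overline{x}_1\bigr)}{u^2}\), so one needs to verify that this bound does not exceed \(\omega(\overline{x}_1)\) on \((0,\overline{x}_1]\) (or handle the crossing some other way). Without such an estimate your case~(i) is a plan, not a proof.

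Two further caveats. First, the chain \(v^{(1)}\le v<\omega(\overline{x}_1)\le\omega(u^{(1)})\) in your case~(ii) uses that \(\omega(\overline{x}_1)\) is the global minimum of \(\omega\) on \((0,1]\). The paper asserts this, but it fails for \(\gamma\) near the threshold: at \(\gamma^2=\tfrac{4}{27}\) one has \(\omega(2/3)=\tfrac{32}{27}>\tfrac{31}{27}=\omega(1)\), so by continuity \(\omega(\overline{x}_1)>\omega(1)=1+\gamma^2\) on a left neighborhood of \(\gamma=\tfrac{2}{3\sqrt{3}}\); hence \(\omega(\overline{x}_1)\le\omega(u^{(1)})\) is not automatic when \(u^{(1)}\) is close to \(1\), and an additional argument (or a restriction on \(\gamma\)) is required there. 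You inherit this from the paper's preceding discussion, but your proof leans on it explicitly. Second, the monotonicity \(v^{(n+1)}\le v^{(n)}\) needs \(\beta\le\Psi(u)\) on \((0,1]\), i.e.\ the absence of positive fixed points; this is not literally implied by the hypotheses of Proposition~\ref{lim3} (only Proposition~\ref{propv} and \(0<\gamma<\tfrac{2}{3\sqrt{3}}\) are assumed), so it should be stated as an explicit hypothesis, as the analogous conditions on \(\beta\) are in Propositions~\ref{lim1} and~\ref{lim2}.
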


Thus, the case in which there is no positive fixed point has been fully analyzed.

Now, assume that there exists a unique positive fixed point $E_1 = (u_1, v_1)$.
According to Theorem~\ref{thm1}, a necessary condition for this is
\[
\beta > \Psi(1) = \frac{(r + \theta + r\gamma)(\gamma^2 + 1)}{\gamma + 1}.
\]
Under this condition, and by Proposition~\ref{prop1}, the fixed point $(1,0)$ is a saddle (unstable).
Numerical analysis shows that if the unique positive fixed point $E_1 = (u_1, v_1)$ is attracting (i.e., $q(u_1) < 1$) and the parameters satisfy one of the conditions given in Proposition~\ref{propv}, then any trajectory starting from a positive initial point lying below the curve $\omega(u)$ converges to $E_1$.

If there exist two positive fixed points $E_1 = (u_1, v_1)$ and $E_2 = (u_2, v_2)$, then the fixed point $E_1$ cannot be globally attractive. Since the fixed point $E_2$ is always a saddle, from general theory there exists an invariant curve passing through the fixed point $E_2$, and the set $M$ is divided into two parts: $M_1$ and $M_2$, where $M_1$ contains the fixed point $E_1$ while $M_2$ contains the fixed point $(1,0)$. In this case, two possibilities are observed:
\begin{itemize}
    \item  If the fixed point $E_1 = (u_1, v_1)$ is attracting, then any trajectory starting from $M_1$ converges to the fixed point $E_1$, while any trajectory starting from $M_2$ converges to $(1,0)$.
    \item  If the fixed point $E_1$ is repelling and no invariant closed curve about it is formed, then any trajectory starting from $M$ converges to the fixed point $(1,0)$.
\end{itemize}

\section{numerical simulations}

\textbf{Example 1.} (\emph{Case of a Unique Positive Fixed Point})\\
Consider system~\eqref{h12} with parameters \( r = 0.5 \), \( \theta = 4 \), and \( \gamma = 1 \). Solving the equations \( \beta = \Psi(u) \) and \( q(u) = 1 \), we find the bifurcation value of \( \beta \) to be
\[
\beta_0 \approx 7.4838.
\]
Moreover, since \( \Psi(1) = 5 \) and \( \beta > \Psi(1) \), and the parameter triple \( (r, \gamma, \theta) \in R_2 \), it follows from case~(i) of Theorem~\ref{thm1} that there exists a unique positive fixed point. This fixed point is approximately
\[
E_1 \approx (0.6057,\, 0.8896).
\]

The associated eigenvalues (multipliers) of the Jacobian matrix at \( E_1 \) are
\[
\lambda_1 \approx 0.6058 - 0.7955i, \quad \lambda_2 \approx 0.6058 + 0.7955i,
\]
which form a complex conjugate pair.

The computed normal form coefficients are given by
\[
L_{20} \approx 0.0823 + 0.1340i, \quad L_{11} \approx -0.0412 - 0.2075i,
\]
\[
L_{02} \approx -0.1918 + 0.0907i, \quad L_{21} \approx 0.0095 - 0.0231i.
\]

The corresponding discriminant quantity is
\[
\mathcal{L} \approx -0.036383 < 0.
\]
Hence, by Theorem~\ref{bifurcation}, the system~\eqref{h12} undergoes a \textbf{Neimark--Sacker bifurcation}, and an \textbf{attracting} invariant closed curve emerges from the fixed point when \( \beta > \beta_0 \) (i.e., when \( \beta^* > 0 \)).

Figure~\ref{diag} illustrates the Neimark--Sacker bifurcation diagram of system~\eqref{h12} along with the corresponding Maximal Lyapunov Exponent. In Figure~\ref{fig1}(a), the fixed point remains attracting since \( q(u_1) < 1 \). In Figure~\ref{fig1}(b), an invariant closed curve is born. Figures~\ref{fig1}(c), (d), and (e) demonstrate the expansion and deformation of the invariant closed curves. Finally, Figures~\ref{fig1}(f), (g), and (h) display trajectories starting from the initial point \( (0.1, 0.3) \) for various values of \( \beta \).

\begin{figure}[h!]
    \centering
    \subfigure[]{\includegraphics[width=0.7\textwidth]{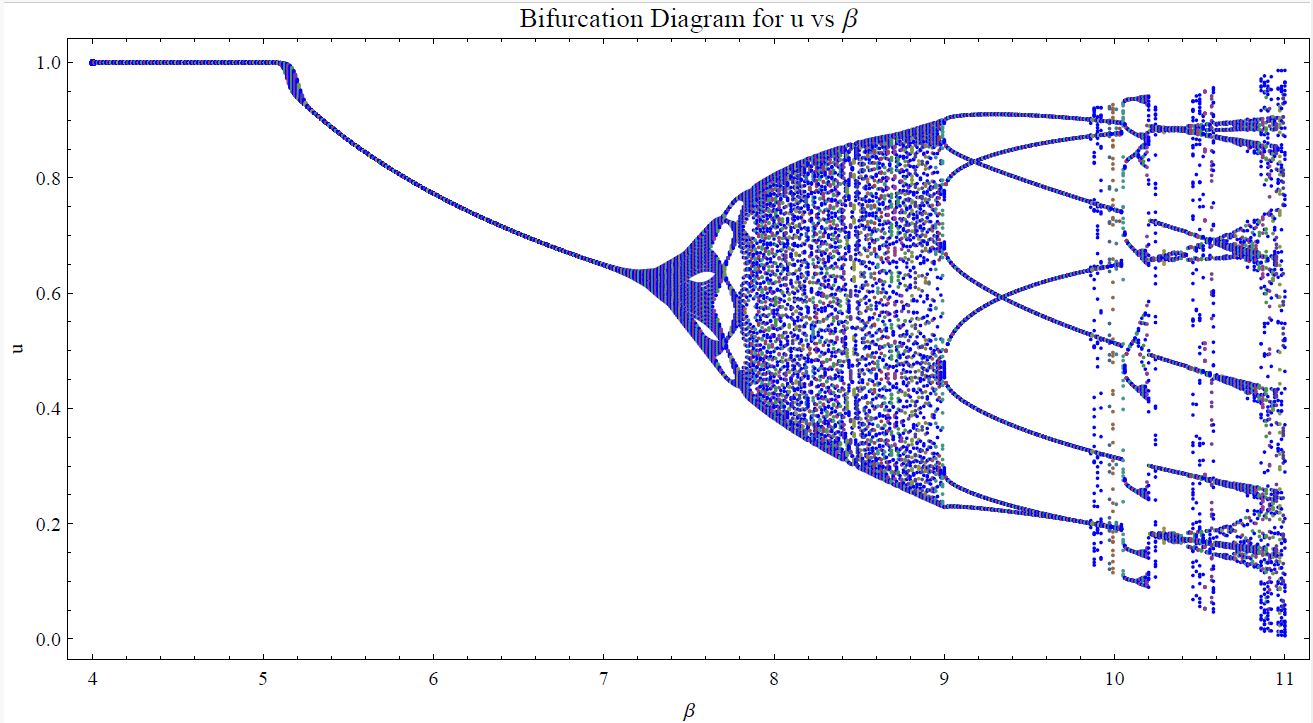}} \hspace{0.3in}
    \subfigure[]{\includegraphics[width=0.7\textwidth]{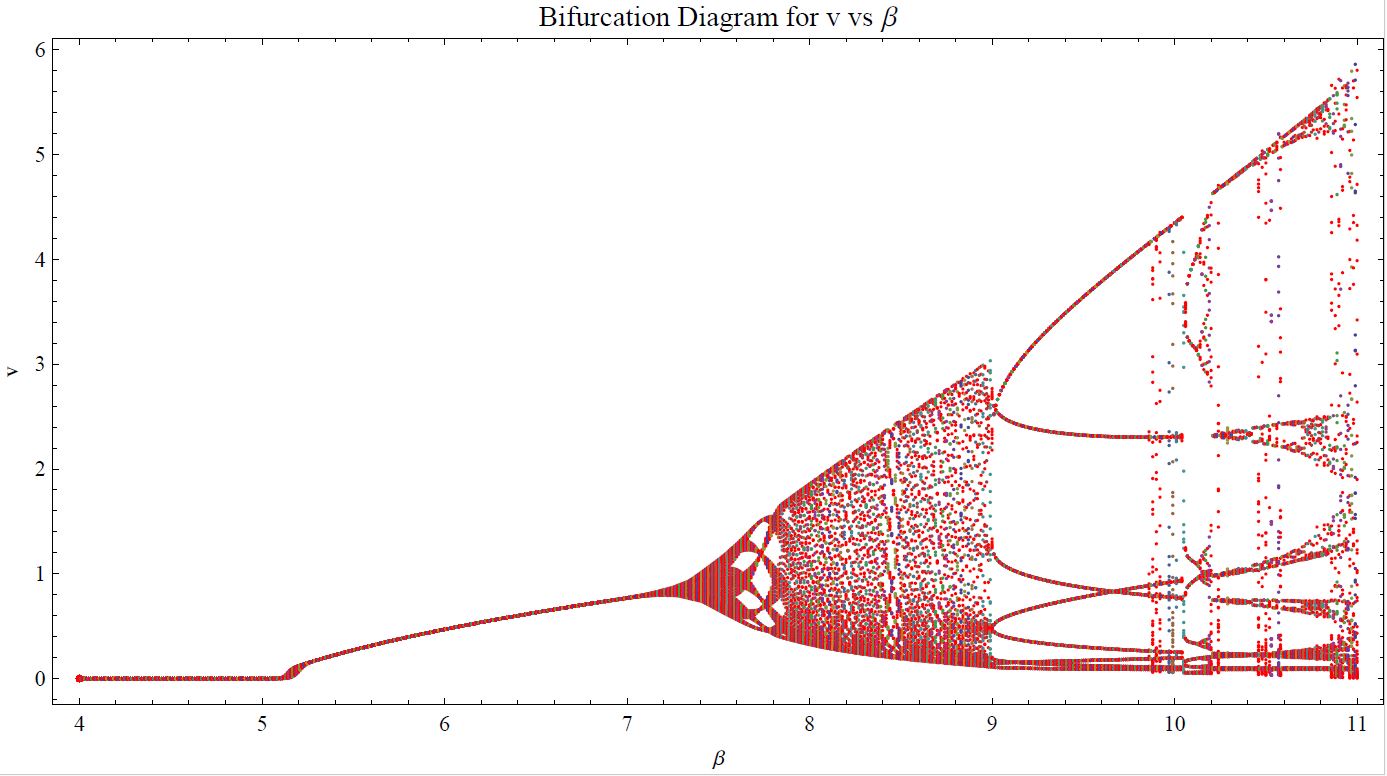}}
    \subfigure[]{\includegraphics[width=0.7\textwidth]{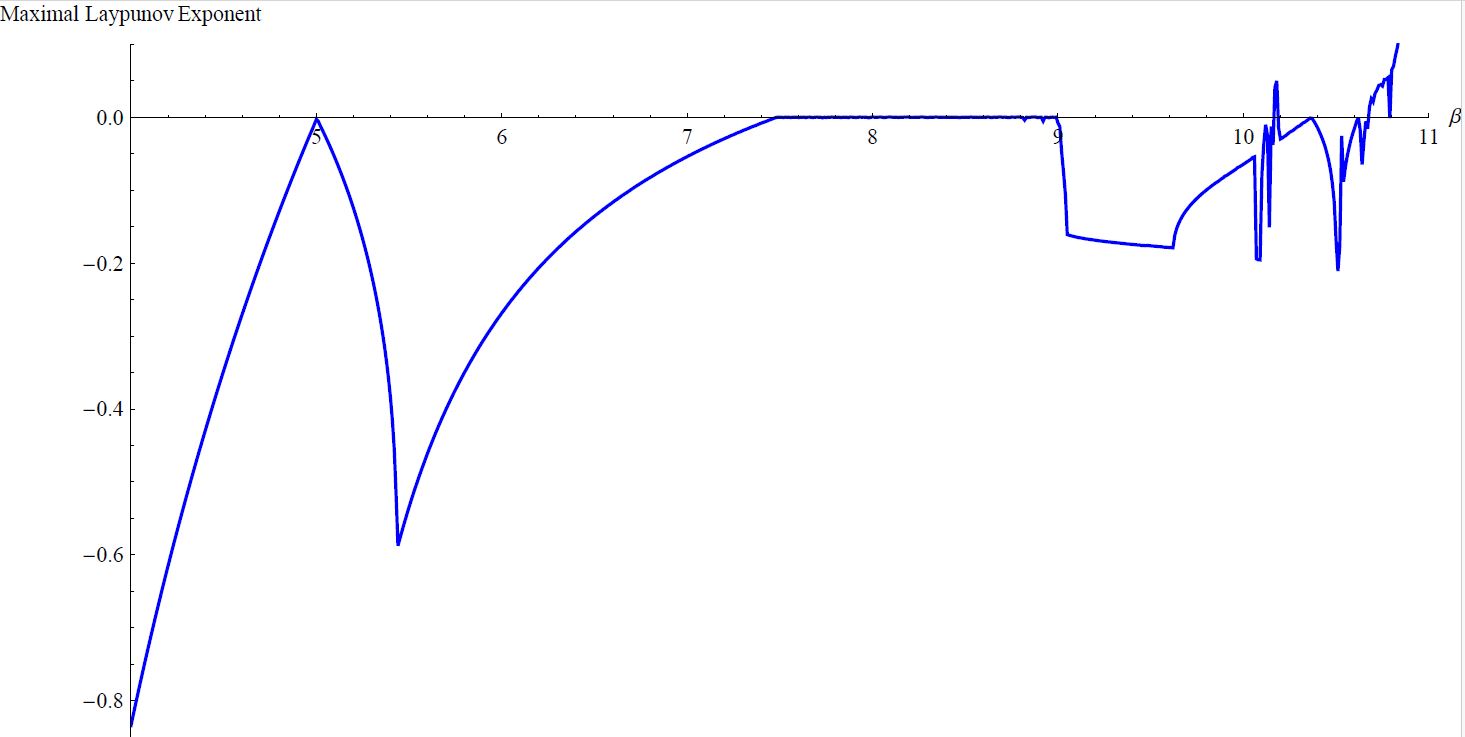}}
    \caption{Bifurcation diagrams for the system (\ref{h12}) with parameters \( r = 0.5 \), \( \gamma =1 \), \( \theta =4 \), and initial values \( u^0 = 0.1 \), \( v^0 = 0.3 \), as the bifurcation parameter \( \beta \) varies in the interval [4,11]. In (c), the maximal Lyapunov exponent corresponding to (a) and (b) is presented.}
    \label{diag}
\end{figure}

\begin{figure}[h!]
    \centering
    \subfigure[\tiny$\beta=7.46, u_1\approx0.6077, q(u_1)\approx0.9955, (0.1, 0.3).$]{\includegraphics[width=0.45\textwidth]{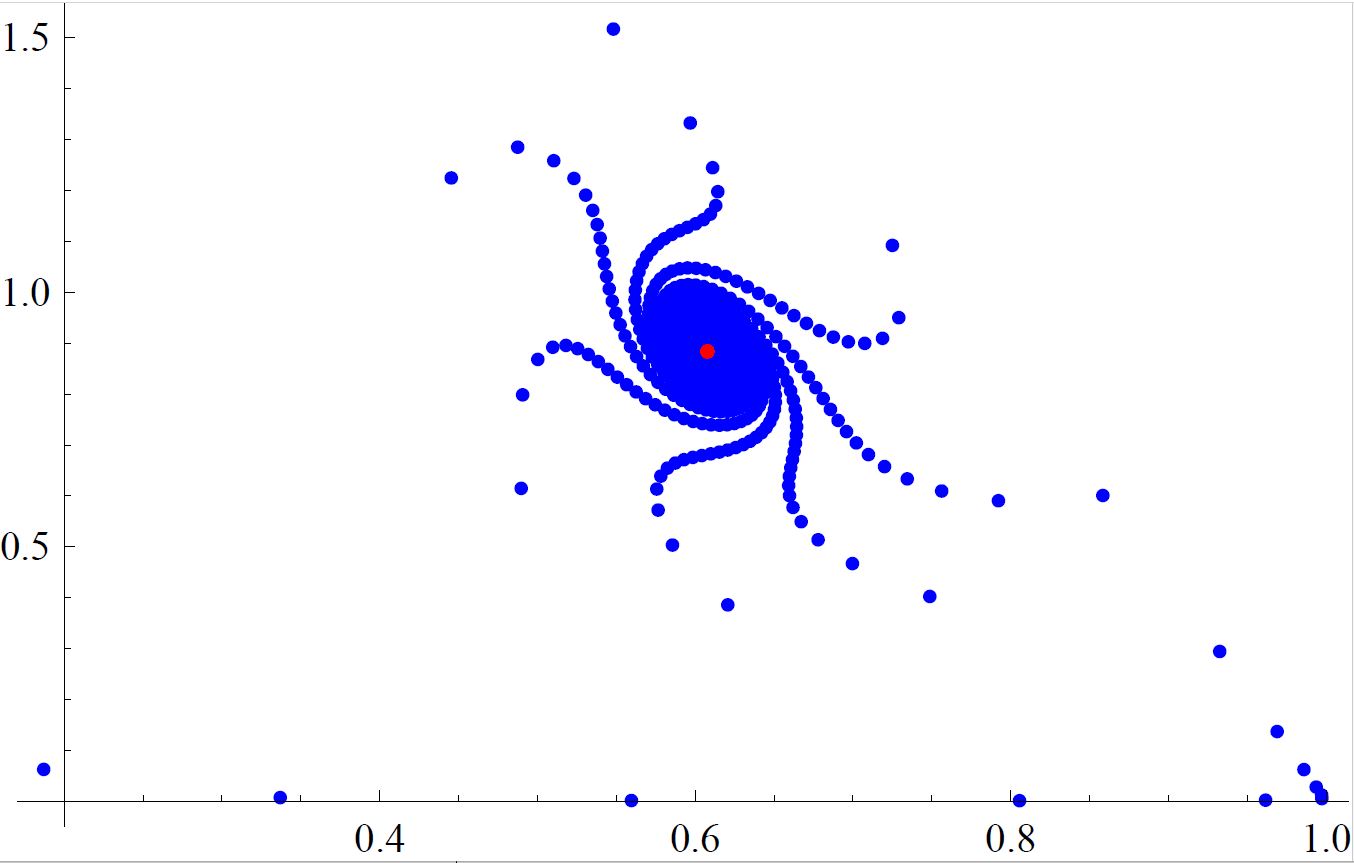}}
    \subfigure[\tiny$\beta=7.5, u_1\approx0.6044, q(u_1)\approx1.003, (0.1, 0.3).$]{\includegraphics[width=0.45\textwidth]{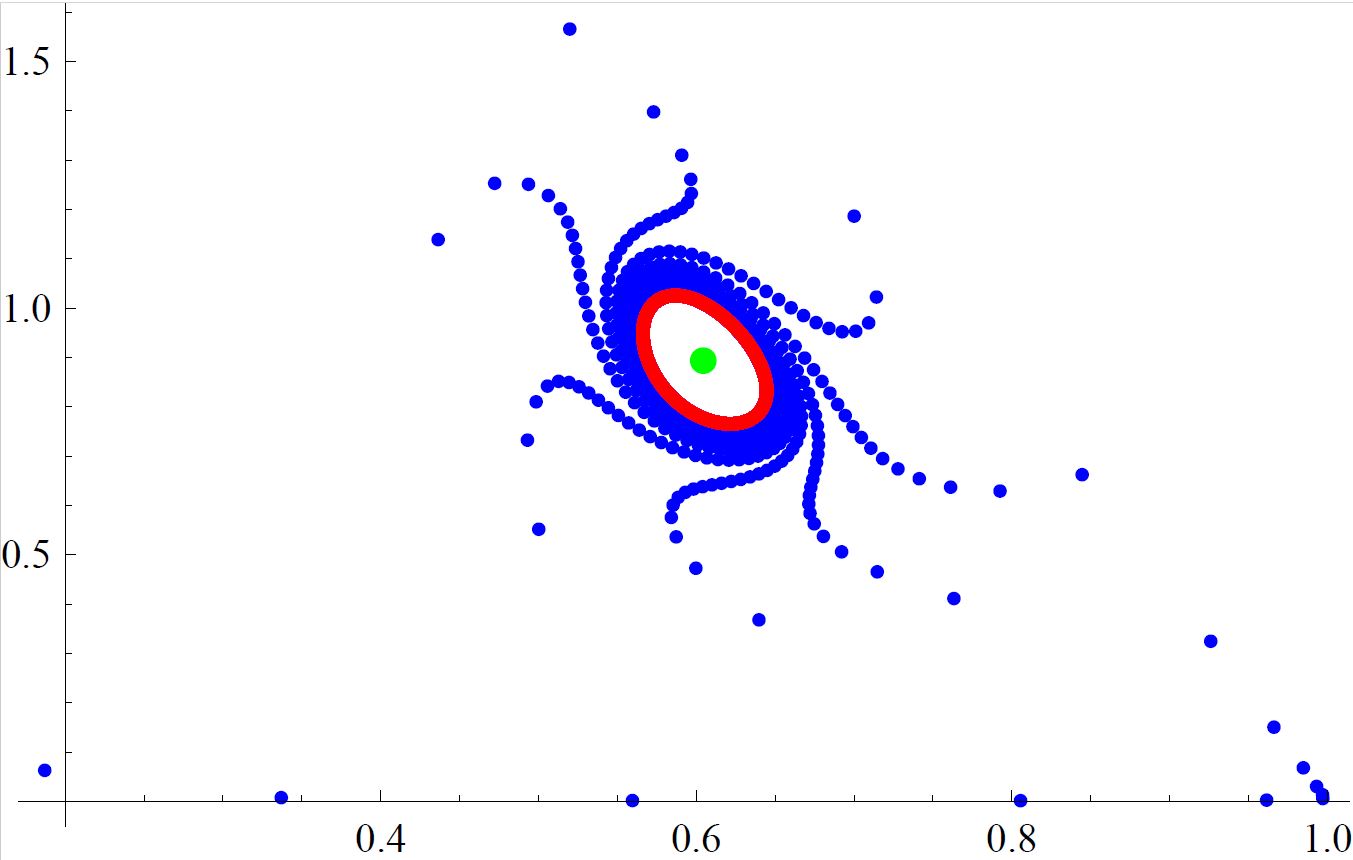}} \hspace{0.3in}
    \subfigure[\tiny$\beta=7.6, u_1\approx0.5965, q(u_1)\approx1.0212, (0.1, 0.3).$]{\includegraphics[width=0.44\textwidth]{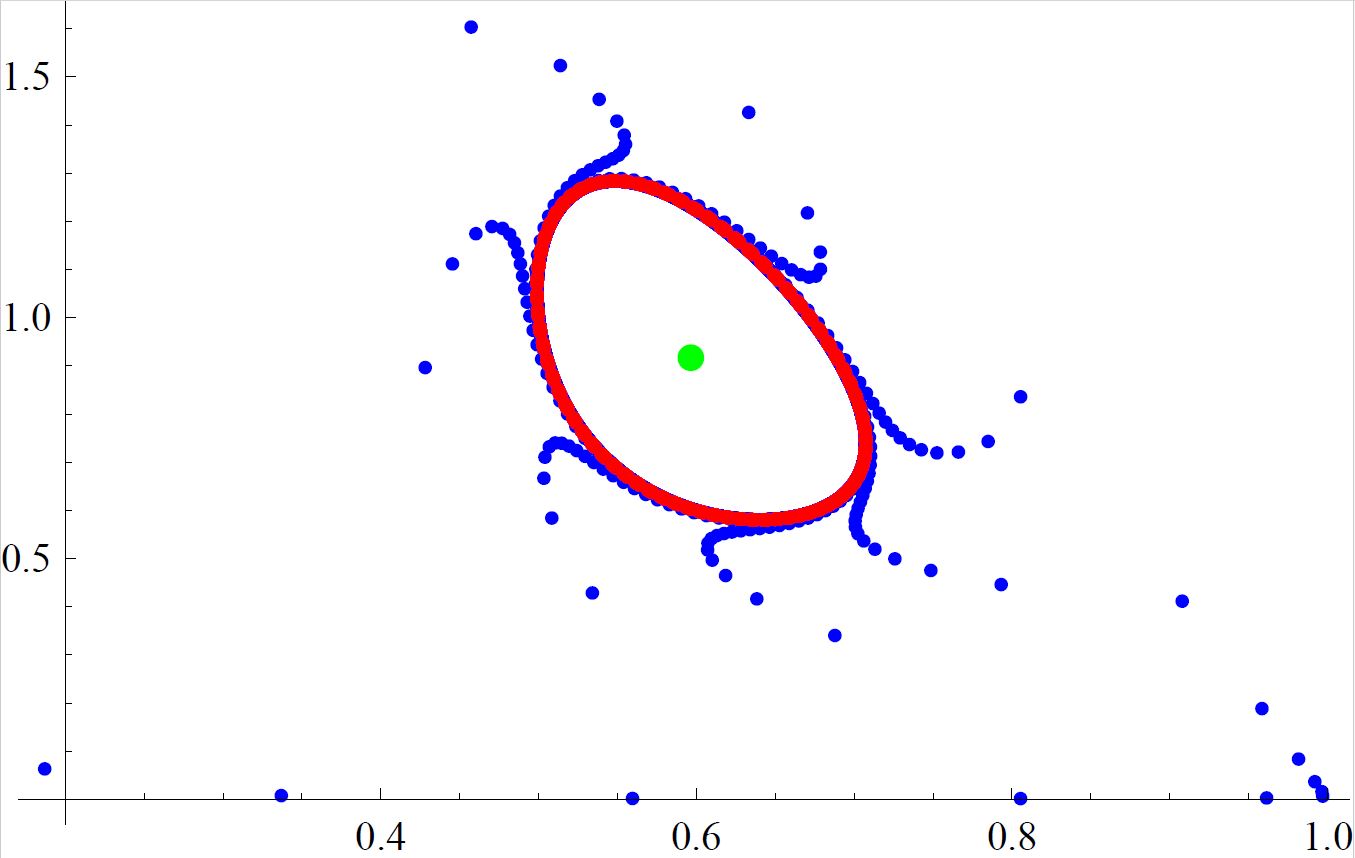}} \hspace{0.3in}
    \subfigure[\tiny$\beta=7.6, u_1\approx0.5965, q(u_1)\approx1.0212, (0.62, 0.84).$]{\includegraphics[width=0.44\textwidth]{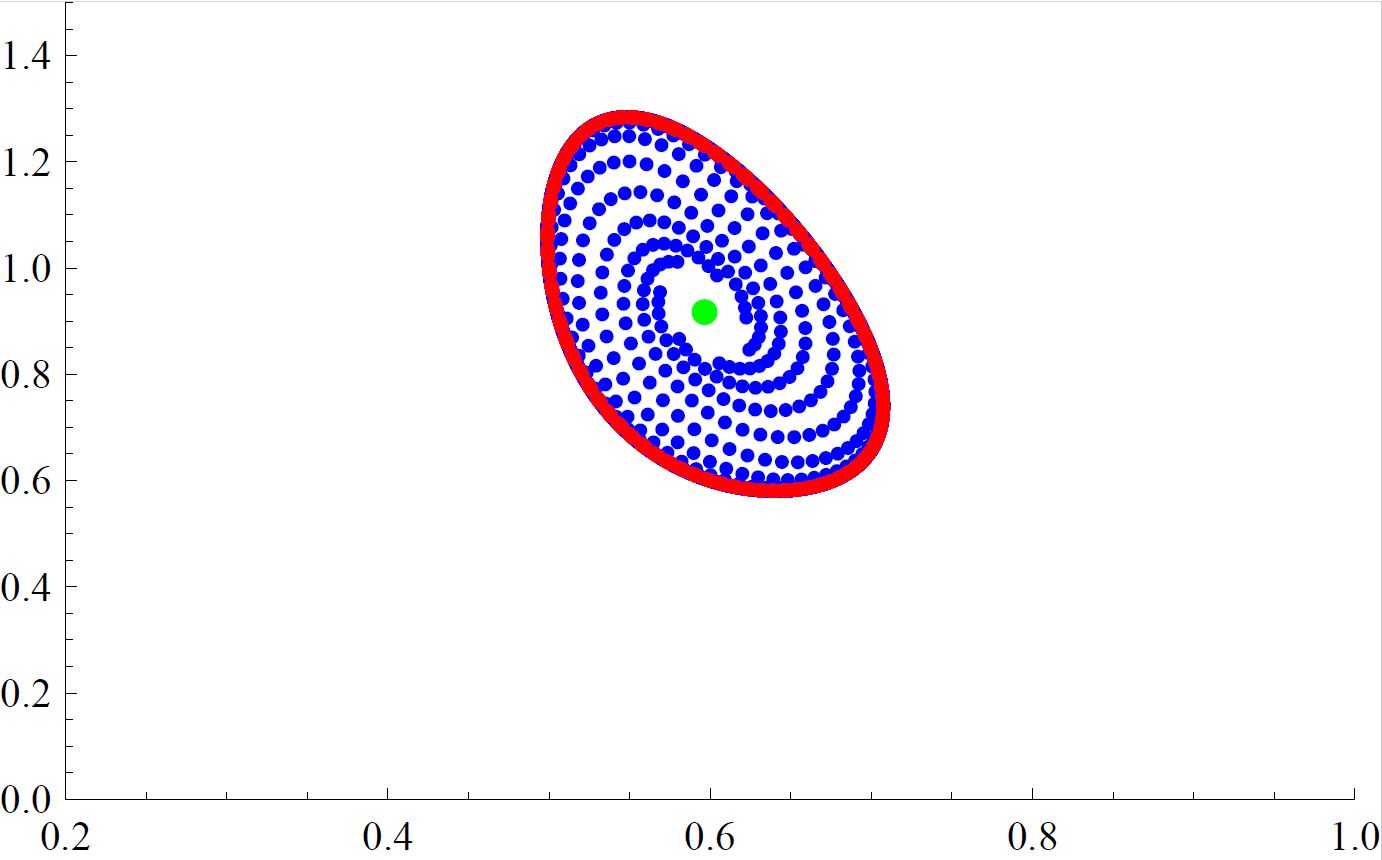}}\hspace{0.3in}
    \subfigure[\tiny$\beta=8.9, u_1\approx0.5135, q(u_1)\approx1.1967, (0.1, 0.3).$]{\includegraphics[width=0.44\textwidth]{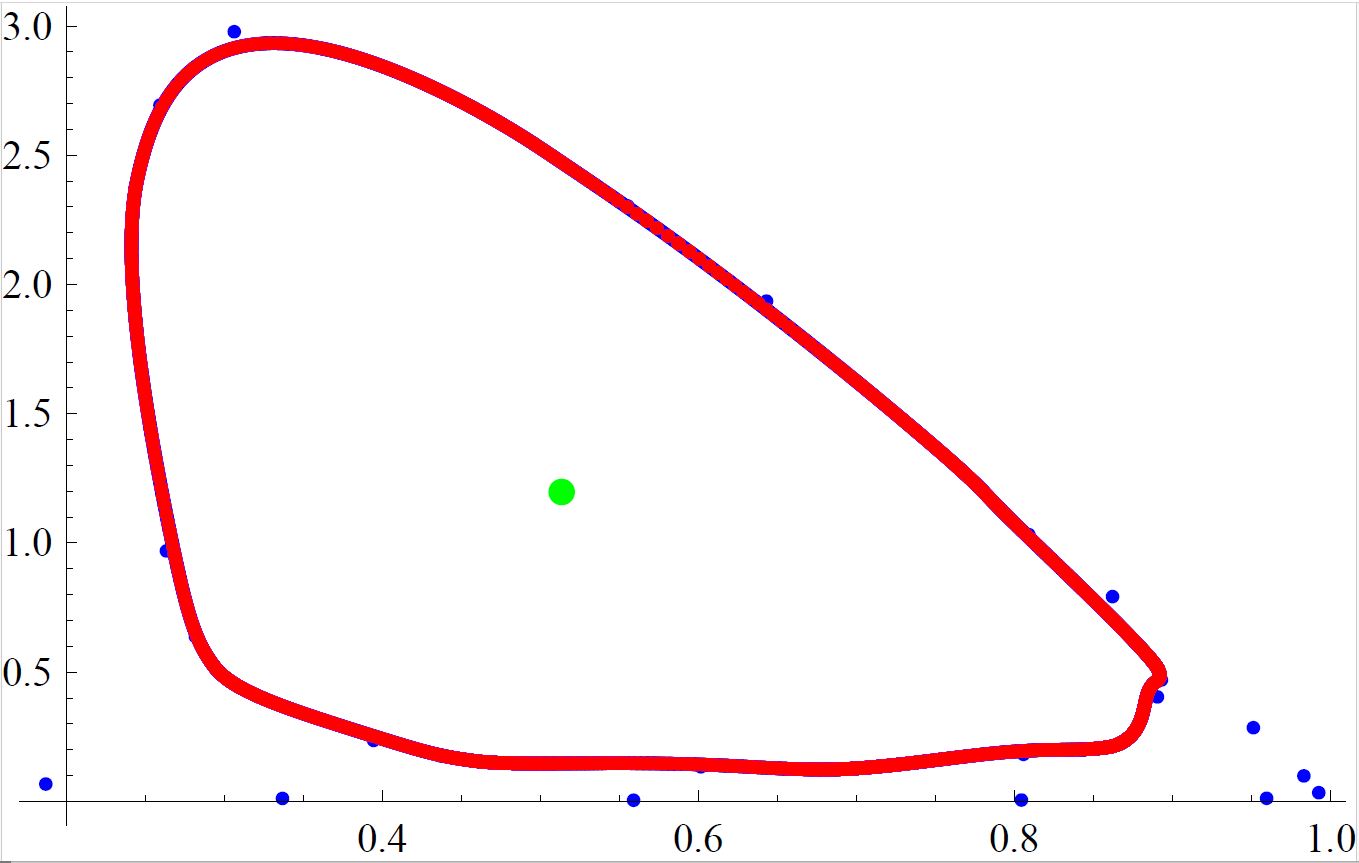}}\hspace{0.3in}
    \subfigure[\tiny$\beta=9.0, u_1\approx0.5083, q(u_1)\approx1.2127, (0.1, 0.3).$]{\includegraphics[width=0.44\textwidth]{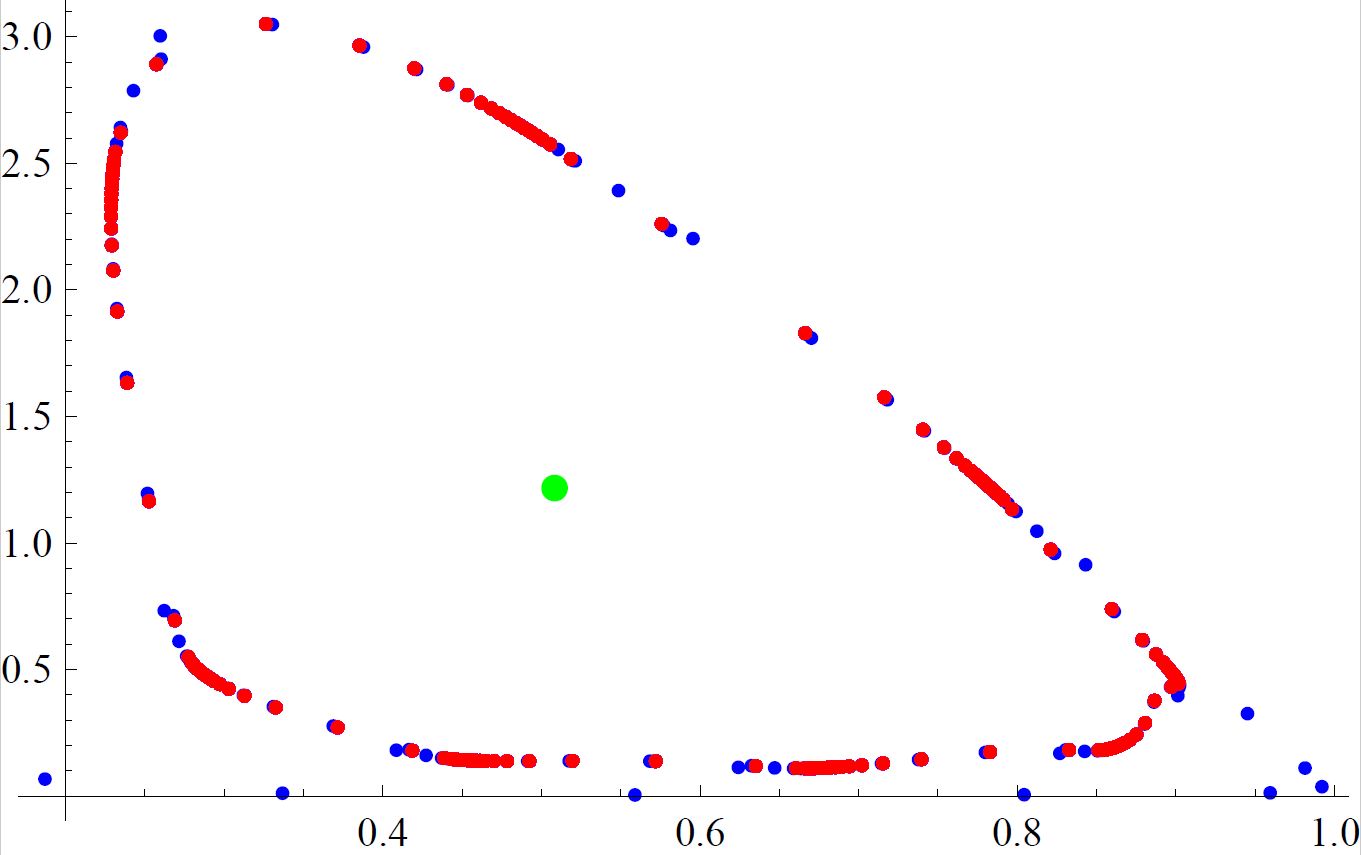}}\hspace{0.3in}
    \subfigure[\tiny$\beta=10.8, u_1\approx0.4337, q(u_1)\approx1.3313, (0.1, 0.3).$]{\includegraphics[width=0.44\textwidth]{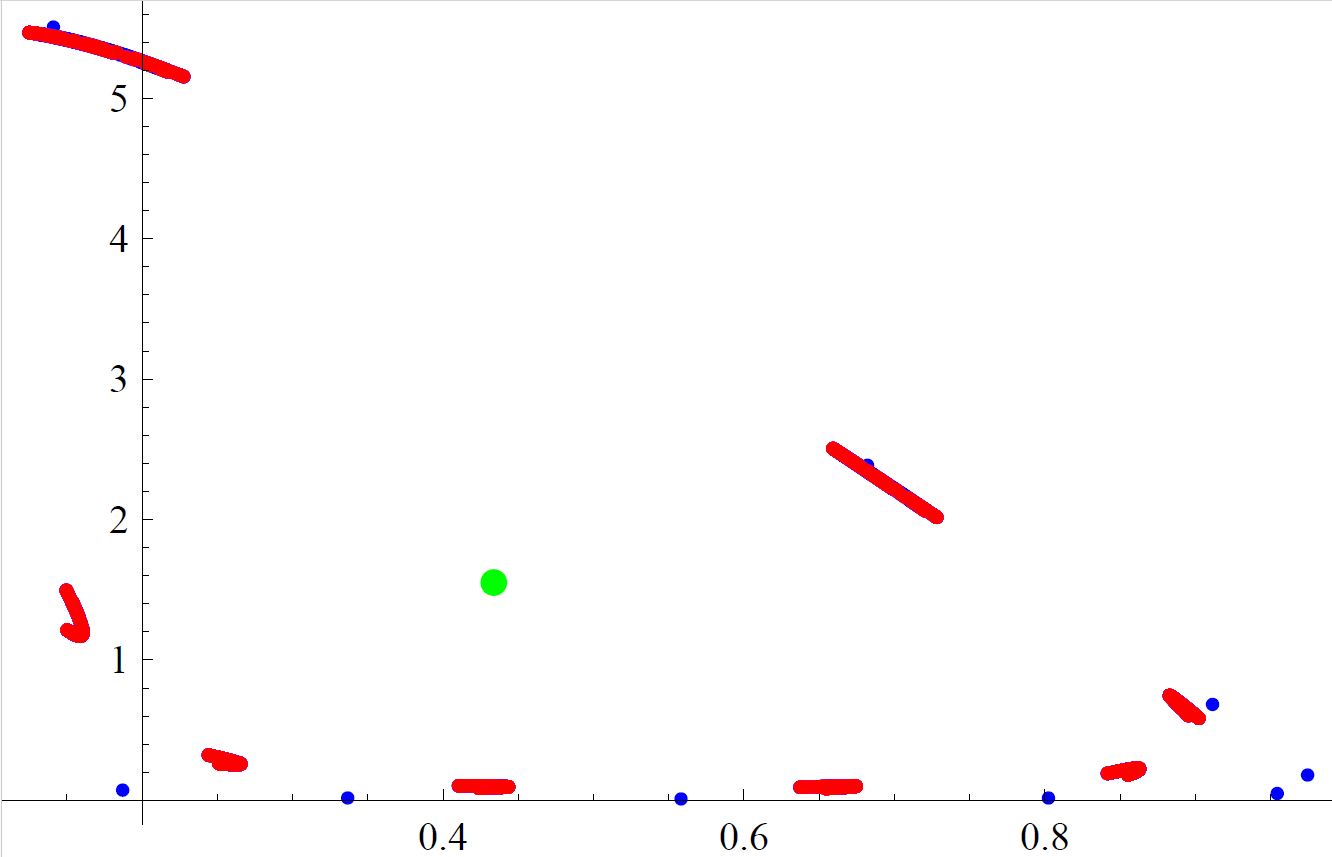}}\hspace{0.3in}
    \subfigure[\tiny$\beta=11, u_1\approx0.4271, q(u_1)\approx1.3406, (0.1, 0.3).$]{\includegraphics[width=0.44\textwidth]{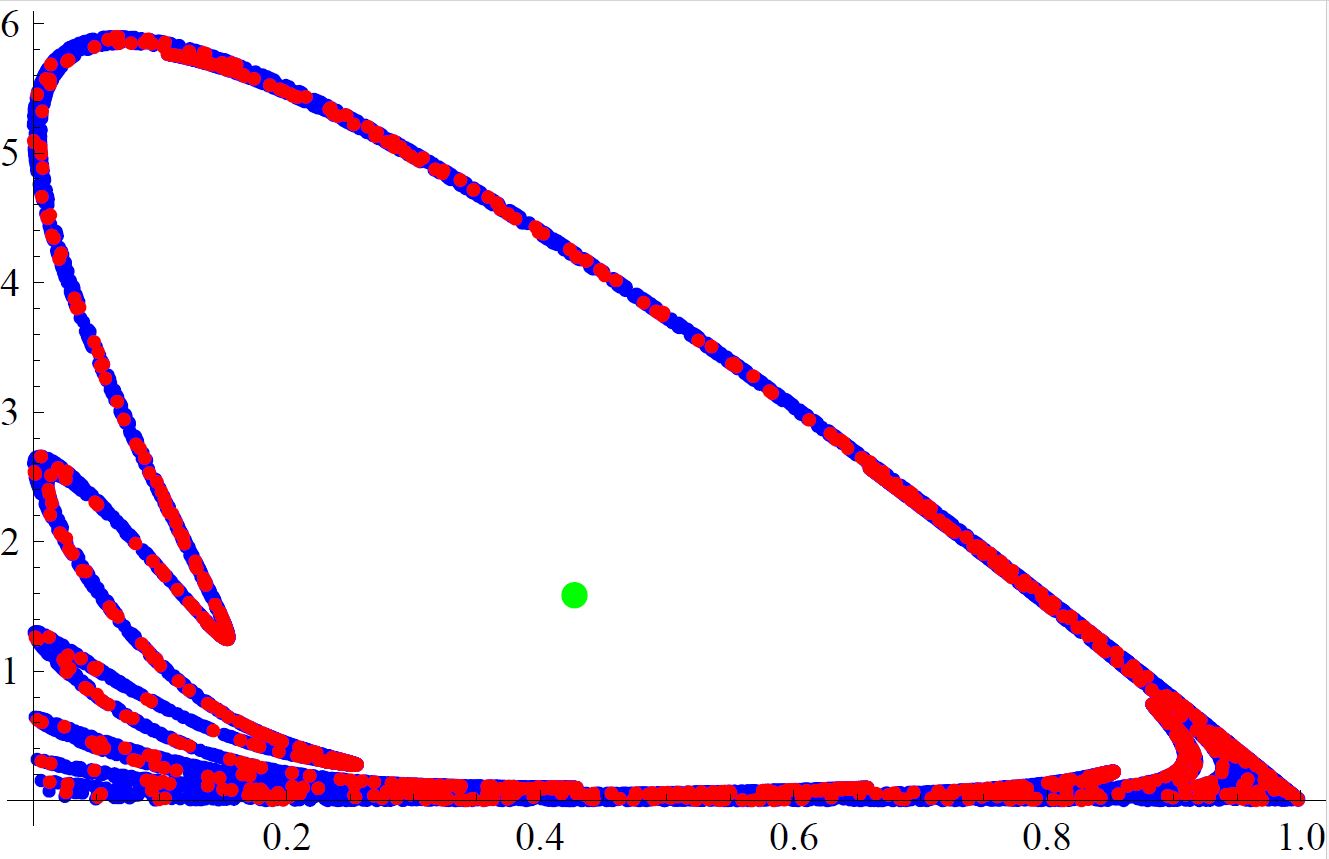}}
 \caption{Phase portraits of system~\eqref{h12} for parameters \( r = 0.5 \), \( \theta = 4 \), \( \gamma = 1 \), and \( n = 10{,}000 \). The green point represents the fixed point \( E_1 \), while the red points form a set of limit points. Panels~(e)--(h) display trajectories corresponding to various values of \( \beta \).}
\label{fig1}

\end{figure}

\textbf{Example 2.} (\emph{Case of Two Positive Fixed Points})\\
Consider system~\eqref{h12} with parameters \( r = 0.5 \), \( \theta = 5 \), and \( \gamma = 0.2 \). Then \( \Psi(1) = 4.85333 \), and the real solution of the equation \( h(u) = 0 \) is \( \widehat{u} = 0.5476 \). Evaluating the function \( \Psi \) at this point gives \( \Psi(\widehat{u}) = 4.71762 \).

According to Theorem~\ref{thm1}, if \( 4.71762 < \beta < 4.85333 \), then the system admits two positive fixed points. Figure~\ref{twofp} shows the curves corresponding to the first coordinate of the positive fixed points as a function of the parameter \( \beta \).

To verify that a Neimark-Sacker bifurcation can also occur in this case, we solve the system of equations \( \beta = \Psi(u) \) and \( q(u) = 1 \), obtaining the bifurcation value
\[
\beta_0 \approx 4.734145.
\]
Since \( \beta_0 \in \left( \Psi(\widehat{u}), \Psi(1) \right) \), there exist two positive fixed points at \( \beta = \beta_0 \). These fixed points are approximately given by
\[
E_1 \approx (0.4678,\, 0.2944), \quad E_2 \approx (0.6549,\, 0.2470).
\]
Moreover, at the fixed point \( E_1 \), the condition \( q(u_1) = 1 \) is satisfied. Additionally, \( q(u_1) > 1 \) for \( \beta > \beta_0 \) and \( q(u_1) < 1 \) for \( \beta < \beta_0 \), indicating a bifurcation at \( \beta_0 \).

We now compute the discriminating quantity \( \mathcal{L} \) at the point \( E_1 \).

The corresponding multipliers are
\[
\lambda_1 \approx 0.9499 - 0.3123i, \quad \lambda_2 \approx 0.9499 + 0.3123i,
\]
which are complex conjugates.

The normal form coefficients are computed as
\[
L_{20} \approx -0.1883 - 4.3521i, \quad L_{11} \approx -2.1160 - 9.0345i,
\]
\[
L_{02} \approx -1.9242 - 4.3348i, \quad L_{21} \approx -13.6802 + 78.0383i.
\]

The resulting discriminating quantity is
\[
\mathcal{L} \approx -90.9083 < 0.
\]

Therefore, by Theorem~\ref{bifurcation}, the system~\eqref{h12} undergoes a \textbf{Neimark--Sacker bifurcation}, and an \textbf{attracting} invariant closed curve emerges from the fixed point when \( \beta > \beta_0 \) (i.e., when \( \beta^* > 0 \)).

In Figure~\ref{fig2}, two trajectories of system~\eqref{h12} are presented for different values of \( \beta \). In panel~(a), an invariant closed curve is formed, while in panel~(b), the trajectory converges to the fixed point \( (1, 0) \).

\begin{figure}[h!]
    \centering
  \includegraphics[width=0.7\textwidth]{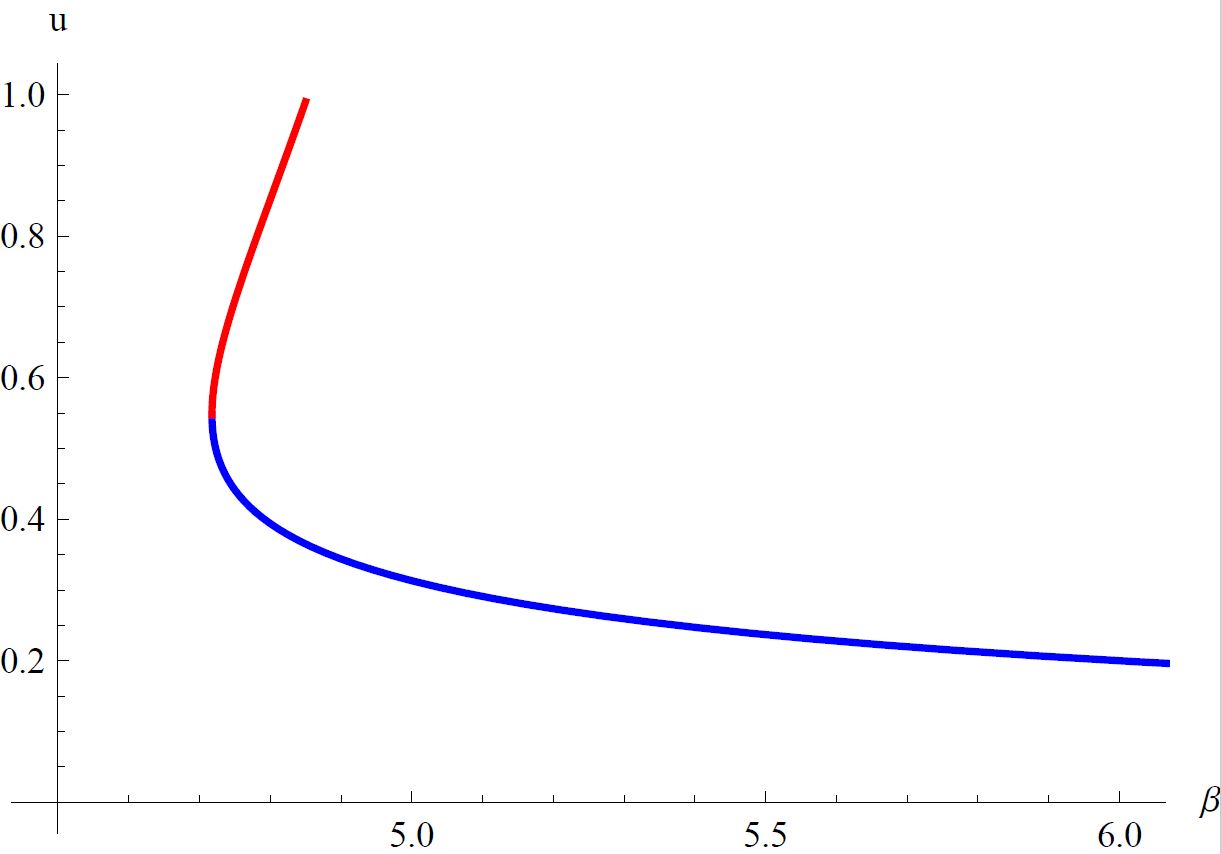}
    \caption{Bifurcation behaviour of the model (\ref{h12}) with parameters \( r = 0.5 \), \( \theta = 5 \), and \( \gamma = 0.2 \). The horizontal axis shows the bifurcation parameter \( \beta \), while the vertical axis represents the first coordinate \( u \) of the positive fixed points. The blue curve corresponds to the fixed point \( (u_1, v_1) \), and the red curve to the saddle point \( (u_2, v_2) \). For \( \beta \in(4.71762,4.85333) \), two distinct positive fixed points exist.}
    \label{twofp}
\end{figure}

\begin{figure}[h!]
    \centering
    \subfigure[\tiny$\beta=4.75, u_1\approx0.4413, q(u_1)\approx1.0775, (0.7, 0.3).$]{\includegraphics[width=0.45\textwidth]{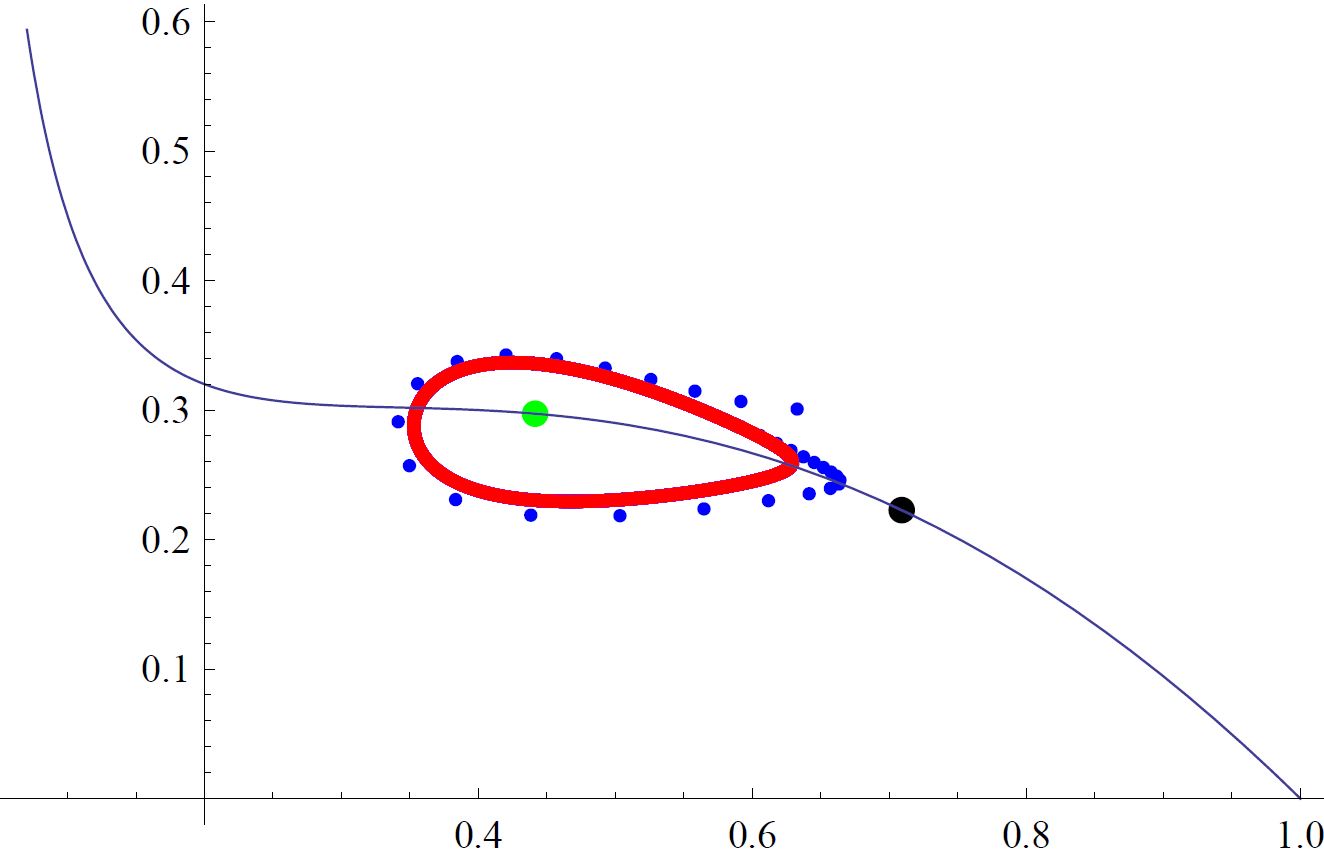}} \hspace{0.3in}
    \subfigure[\tiny$\beta=4.78, u_1\approx0.4094, q(u_1)\approx1.1807, (0.45, 0.3).$]{\includegraphics[width=0.45\textwidth]{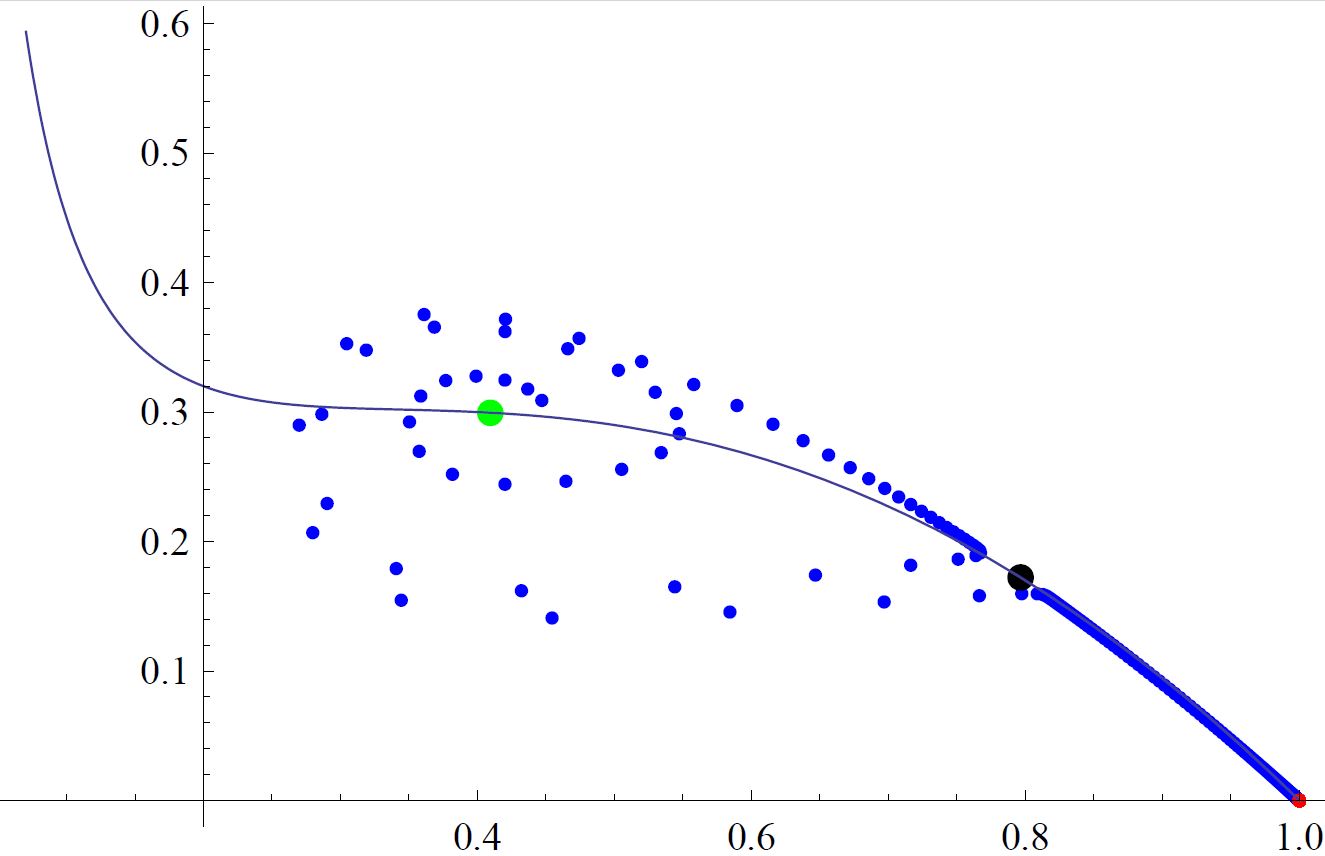}} \hspace{0.3in}
   \caption{Two trajectories of system~\eqref{h12} with parameters \( r = 0.5 \), \( \theta = 5 \), \( \gamma = 0.2 \), and \( n = 10{,}000 \). The green point denotes the fixed point \( E_1 \), while the black point corresponds to saddle point \( E_2 \). The blue curve represents the function \( \frac{(1 - x)(\gamma^2 + x^2)}{x} \), along which all positive fixed points lie.}
    \label{fig2}
\end{figure}

\section*{Conclusion}

In this work, we studied the dynamics of a discrete-time phytoplankton-zooplankton model with Holling type~III predation and type~II toxin release. Theorem~\ref{thm1} established conditions for the existence of positive fixed points, and Theorem~\ref{type} classified their stability. We found that whenever two positive fixed points \(E_1\) and \(E_2\) exist, \(E_2\) is always a saddle. Theorem~\ref{bifurcation} showed that system~\eqref{h12} undergoes a Neimark-Sacker bifurcation at \(E_1\) under suitable parameter conditions. Furthermore, Propositions~\ref{lim1}, \ref{lim2}, \ref{lim3}  proved that the boundary fixed point \( (1,0) \) is globally stable when no positive fixed point exists. These results reveal the system’s rich dynamical behavior and may inform future ecological modeling.

To illustrate the theory, we presented two examples. In Example~1, parameters were chosen so that the system has a single positive fixed point, leading to an attracting closed invariant curve. In Example~2, two positive fixed points were considered, one undergoing a Neimark-Sacker bifurcation and the other a saddle, highlighting the model’s broader dynamical possibilities.

For future work, we plan to explore more general functional responses and extend the model to include additional trophic groups, such as mixoplankton and bacteria, which are essential in marine ecosystems. We also aim to investigate other bifurcation types and derive rigorous analytical criteria for the emergence of chaotic attractors and other complex dynamical phenomena.

%\section*{Acknowledgements}
%The first author thanks to the International Mathematical Union (IMU-Simons Research Fellowship Program) for
%providing financial support of his visit to the Paul Valery University, Montpellier, France.

%%% Comment out this section when you \bibliography{references} is enabled.

\section*{Declarations}

\noindent This study did not involve human participants or animals and therefore required no ethical approval; it received no financial support, and no data are applicable for availability.

\end{document}